\documentclass[11pt,a4paper,reqno]{article}
\usepackage[margin=1in]{geometry}
\usepackage[hidelinks]{hyperref}
\usepackage{amsmath,amssymb,amsfonts}
\usepackage{amsthm}
\usepackage{mathtools}
\usepackage{bm}
\usepackage{bbm}
\usepackage{graphicx}
\usepackage{enumitem}
\usepackage{hyperref}
\usepackage{mathrsfs}
\usepackage{color}
\usepackage{algorithm}
\usepackage{algorithmic}

\newtheorem{theorem}{Theorem}[section]
\newtheorem{lemma}[theorem]{Lemma}
\newtheorem{proposition}[theorem]{Proposition}
\newtheorem{corollary}[theorem]{Corollary}
\newtheorem{definition}[theorem]{Definition}

\newtheorem{assumption}[theorem]{Assumption}

\newcommand{\R}{\mathbb{R}}

\newcommand{\E}{\mathbb{E}}

\newcommand{\Cov}{\operatorname{Cov}}
\newcommand{\KL}{\mathrm{D}_{\mathrm{KL}}}

\newcommand{\tr}{\operatorname{tr}}

\title{A Kernel Formula for  Kinetic Fokker-Planck Equations}

\author{
  Fuqun Han\thanks{Department of Mathematics, City University of Hong Kong, Hong Kong. \texttt{fuqun.han@cityu.edu.hk}.}
  \and
  Wuchen Li\thanks{Department of Mathematics, University of South Carolina, Columbia, SC, USA. \texttt{wuchen@mailbox.sc.edu}.  W. Li’s work is supported by the AFOSR YIP award No. FA9550-23-1-0087, NSF RTG: 2038080, NSF FRG DMS-2245097, NSF DMS-2601952 and the McCausland Faculty Fellow in University of South Carolina.}
}
\date{}
\begin{document}
\maketitle
\begingroup
\renewcommand{\thefootnote}{}
\footnotetext{The authors thank Professor Stanley Osher for many helpful discussions during the development of this work.}
\endgroup
\begin{abstract}
We derive an explicit one-step kernel formula for the kinetic Fokker-Planck equation. The construction begins with a reference dynamics admitting an explicit transition kernel and uses a carefully chosen auxiliary weight to encode the drift perturbation. The resulting kernel formula provides an explicit approximation of the density evolution. In a weighted phase-space setting, we establish local weak consistency and first-order finite-time weak convergence under suitable regularity and moment assumptions. We also give a formal variational interpretation of the formula in terms of a regularized Wasserstein-type proximal operator and present analogous constructions for a broader class of parabolic equations. Numerical experiments illustrate the accuracy of the kernel approximation and its application to deterministic particle schemes for kinetic sampling.
\end{abstract}

\section{Introduction}

Fokker-Planck equations describe the evolution of probability distributions associated with stochastic processes and interacting particle systems \cite{risken1996fokker}. They arise naturally in settings where the distribution itself is the main object. In machine learning, they appear in score-based generative modeling and diffusion models, where they describe the evolution of data distributions \cite{song2020score,ho2020denoising}. They also arise as forward equations governing agents' state distributions in mean field games and mean field control problems \cite{ruthotto2020machine,lasry2007mean}. Moreover, Fokker-Planck equations arise as mean field descriptions of interacting-particle methods, including consensus-based global optimization and ensemble methods for inverse problems \cite{pinnau2017consensus,calvello2025ensemble}.

A fundamental structural viewpoint comes from optimal transport. In the classical reversible setting, the JKO scheme identifies the Fokker-Planck equation as the Wasserstein gradient flow of the free energy \cite{jordan1998variational}. Closely related dynamic, control-theoretic, and entropic formulations are provided by the Benamou-Brenier formulation and Schr\"odinger bridge problems \cite{benamou2000computational,leonard2014survey,ambrosio2005gradient}.  These formulations interpret the dynamics as transport at the level of densities with entropy dissipation, and they motivate numerical schemes that preserve this structure. However, this structural formulation does not by itself resolve the computational challenges of solving Fokker-Planck equations in high dimensions. The direct implementation of variational schemes such as the JKO scheme typically requires solving a high-dimensional optimal transport problem at each time step. Grid-based solvers suffer from the curse of dimensionality. Particle methods are often more scalable, but they may introduce sampling noise, lose resolution in low-density regions, and provide only indirect access to the evolving density and its derivatives. 

These limitations have motivated a range of density level and mesh free approaches. Examples include machine learning methods for high-dimensional mean field games and control \cite{ruthotto2020machine}, inexact or learned approximations of JKO-type schemes \cite{di2025inexact,lee2024deepjko,xu2023normalizing}, and continuous transport methods such as probability flow formulations, flow matching, and stochastic interpolants \cite{boffi2022probability,lipman2022flow,albergo2025stochastic}.
Beyond the classical Wasserstein gradient flow setting, non-gradient, anisotropic, and degenerate Fokker-Planck equations have also been studied through formulations that separate reversible transport from irreversible dissipation, including the general equation for nonequilibrium reversible and irreversible coupling (GENERIC) framework \cite{ottinger2005beyond,duong2013generic}.

The kinetic Fokker-Planck equation is a particularly important example beyond the Wasserstein gradient flow setting. It is the forward equation of underdamped Langevin dynamics, which is widely used in molecular dynamics, free energy sampling, and momentum based Monte Carlo methods \cite{lelievre2016partial,betancourt2017conceptual,cheng2018underdamped}. Its structure is more delicate than that of the overdamped Langevin equation: transport acts in the position variable, whereas diffusion acts only in the velocity variable. This hypoelliptic coupling creates both analytical and numerical challenges \cite{villani2009hypocoercivity}.  A natural question arises:  Is there an explicit numerical update that is computationally tractable while still reflecting the reversible-irreversible structure of the underlying kinetic Fokker-Planck equation?

This paper constructs an explicit kernel operator for approximating the kinetic Fokker-Planck equation. We start from the kernel formula introduced in \cite{li2023kernel} for the regularized Wasserstein proximal operator. As in \cite{han2024convergence}, this kernel formula yields an approximation to the evolution of the reversible Fokker-Planck equation. We now introduce an explicit kernel formula for the kinetic Fokker-Planck equation
\[
\partial_t\rho +v\cdot\nabla_x\rho - \nabla_v\cdot (\nabla_x V(x)\rho)
= \nabla_v\cdot\!\Bigl(\gamma v\rho+\gamma\beta^{-1}\nabla_v\rho\Bigr),
\]
where $x\in\R^d$ and $v\in\R^d$ denote the position and velocity variables, respectively, $V:\R^d\to\R$ is the potential energy, and $\gamma>0$ and $\beta>0$ are the friction coefficient and inverse temperature. 
In our setting, the construction leads to the explicit one-step operator
\begin{equation}
\label{intro:kernel}
(\mathcal K_h\rho_0)(x,v) := \psi(x,v) \int_{\R^{2d}} \frac{G_h(x,v\mid y,w)\,\rho_0(y,w)} {(G_h\psi)(y,w)}\,dy\,dw,
\end{equation}
where
\[
G_h(x,v\mid y,w)=\frac{3^{d/2}(\beta/\gamma)^d}{(2\pi h^2)^d}\exp\!\left(-\frac{3\beta}{\gamma h^3}\left\|x-y-\frac{h}{2}(v+w)\right\|^2-\frac{\beta}{4\gamma h}\|v-w\|^2\right).
\]
Here $\psi = \exp(-\phi/2)$ with $\phi =  \lambda V(x)+\frac{\beta}{\gamma}v\cdot\nabla_xV(x)+\frac{\beta}{2}|v|^2$ and $\lambda>0$. 
For sufficiently small $h$ in the regime where $0<(G_h\psi)(y,w)<\infty$, the above kernel formula provides a direct, explicit approximation of the density evolution. By differentiating the kernel representation, it also yields a practical estimator for the associated score function, which is the gradient of the log density function.

The kernel formula admits two complementary derivations. The first derivation separates the generator into a reference part with an explicit transition kernel and a drift perturbation. The drift perturbation is then incorporated through an auxiliary weight and a normalization of the reference semigroup. The second comes from the variational formulation. We formulate a constrained optimal control problem with Fisher information regularization and derive the same kernel formula \eqref{intro:kernel} from the associated forward-backward optimality system through a Hopf-Cole transformation. Formally, this variational formulation can be interpreted as a kinetic analog of a regularized Wasserstein proximal step.

Existing variational approaches have clarified important transport and dissipation structures of kinetic and non-gradient Fokker-Planck equations \cite{huang2000variational,duong2014conservative,duong2022entropic,carrillo2021variational, kinetic_OT}. Their numerical realizations often lead to implicit time discretizations or require auxiliary variational subproblems. By contrast, our construction provides an explicit one-step kernel operator for the kinetic Fokker-Planck equation. To the best of our knowledge, a closed form kernel formula of this type has not previously been proposed in the kinetic setting.

The paper is organized as follows. In Section~\ref {sec:kernel_kinetic}, we derive the kernel formula for the kinetic Fokker-Planck equation. We then prove the corresponding weak approximation results in weighted phase space and provide a variational interpretation. Section~\ref{sec:generalizations} records the analogous constructions in more general degenerate and nondegenerate parabolic settings. Finally, Section~\ref{sec:ne} presents numerical experiments using the kernel formula for density evolution and kinetic sampling problems.

\section{Kernel Formula and Variational Structure in the Kinetic Setting}
\label{sec:kernel_kinetic}

In this section, we derive the kernel formula for the kinetic Fokker-Planck equation, first from operator splitting through a reference semigroup and then from a quadratic optimal control problem via a Hopf-Cole transformation.

We consider the kinetic Fokker-Planck equation on phase space
$(x,v)\in\R^d\times\R^d$:
\begin{equation}\label{eq:kinetic-FP}
\partial_t\rho +v\cdot\nabla_x\rho - \nabla_v\cdot (\nabla_x V(x)\rho)= \nabla_v\cdot\!\Bigl(\gamma v\rho+\gamma\beta^{-1}\nabla_v\rho\Bigr),
\end{equation}
where $t>0$, $V:\R^d\to\R$ is the potential energy, $\gamma>0$ is the friction coefficient, and $\beta>0$ is the inverse temperature. In the following, we will write
\[
z=(x,v)\in\R^{2d},
\qquad
U(x,v):=\nabla_xV(x)+\gamma v.
\]
 
To allow unbounded potentials, we work in weighted spaces to handle the tail terms. Let $W:\R^{2d}\to[1,\infty)$ be a weight function to be defined later. Define the weighted space of measurable functions by
\[
L_W^\infty(\R^{2d}) := \left\{ g:\R^{2d}\to\R:\ \|g\|_{L_W^\infty}:= \sup_{(x,v)\in\R^{2d}}\frac{|g(x,v)|}{W(x,v)}<\infty \right\}.
\]
For \(k\in\mathbb N\), we define the weighted space of smooth functions by
\[
C_W^k(\R^{2d}):=\left\{f\in C^k(\R^{2d}) :\|f\|_{C_W^k}:=\sum_{|\alpha|\le k}\|\partial^\alpha f\|_{L_W^\infty}<\infty\right\}.
\]
Next, we introduce the following notation for the weighted moment of a density \(\rho\):
\[
m_W(\rho):=\int_{\R^{2d}}W(x,v)\rho(x,v)\,dx\,dv,
\]
and define the space of probability densities
\[
\mathcal P_W(\R^{2d}):= \left\{\rho\in L^1(\R^{2d}): \rho\ge0,\ \int_{\R^{2d}}\rho dx\,dv=1,\ m_W(\rho)<\infty\right\}.
\]
 
We now introduce the reference backward operator
\begin{equation}\label{eq:D0-kin}
D_0 f:=v\cdot\nabla_x f+\gamma\beta^{-1}\Delta_v f,\qquad f\in C_c^\infty(\R^{2d}),
\end{equation}
and its formal $L^2$-adjoint
\begin{equation}\label{eq:D0star-kin}
D_0^*\rho:=-v\cdot\nabla_x\rho+\gamma\beta^{-1}\Delta_v\rho.
\end{equation}

The operator $D_0$ is the backward Kolmogorov operator associated with the transport diffusion system
\begin{equation}\label{eq:base-SDE-kin}
dx_t=v_t\,dt,
\qquad
dv_t=\sqrt{2\gamma\beta^{-1}}\,dB_t,
\end{equation}
where $B_t$ is a standard Brownian motion. Hence $D_0$ generates an explicit Gaussian semigroup $(G_h)_{h\ge0}$. Solving \eqref{eq:base-SDE-kin} with initial condition $(x_0,v_0)=(y,w)$ shows that $(x_h,v_h)$ is Gaussian, with mean
\[
\E[x_h\mid x_0=y,v_0=w]=y+hw,\qquad \E[v_h\mid x_0=y,v_0=w]=w,
\]
and covariance
\[
\Cov\!\begin{pmatrix}x_h\\ v_h\end{pmatrix}=\gamma\beta^{-1}
\begin{pmatrix}
\frac{2}{3}h^3 I_d & h^2 I_d\\
h^2 I_d & 2h\,I_d
\end{pmatrix}.
\]
Therefore, its transition kernel is the Gaussian density
\begin{equation}\label{eq:G0-kin}
G_h(x,v\mid y,w)=\frac{3^{d/2}(\beta/\gamma)^d}{(2\pi h^2)^d}\exp\!\left(-\frac{3\beta}{\gamma h^3}\left\|x-y-\frac{h}{2}(v+w)\right\|^2-\frac{\beta}{4\gamma h}\|v-w\|^2
\right).
\end{equation}
In the following, we write $(G_h\psi)(y,w) = \int_{\R^{2d}}G_h(x,v|y,w)\psi(x,v)dxdv$.
 
To encode the drift term $U=\nabla_xV+\gamma v$ through an explicit kernel formula, we choose $\phi=\phi_{\mathrm{ki}}^\lambda$ that satisfies $\gamma\beta^{-1}\nabla_v\phi = U$. A convenient choice is
\begin{equation}\label{eq:phi-kin}
\phi_{\mathrm{ki}}^\lambda(x,v) := \lambda V(x)+\frac{\beta}{\gamma}v\cdot\nabla_xV(x)+\frac{\beta}{2}|v|^2,
\qquad \lambda>0.
\end{equation}
By construction, we have
\[
\gamma\beta^{-1}\nabla_v\phi_{\mathrm{ki}}^\lambda(x,v) = \gamma\beta^{-1}\left(\frac{\beta}{\gamma}\nabla_xV(x)+\beta v\right)=\nabla_xV(x)+\gamma v=U(x,v).
\]

Next, we introduce the perturbed backward operator
\begin{equation}\label{eq:L-kin}
\mathcal L f:=D_0f-U\cdot\nabla_v f,
\end{equation}
whose adjoint is
\begin{equation}\label{eq:Lstar-kin}
\mathcal L^*\rho=
D_0^*\rho+\nabla_v\cdot(U\rho)=-v\cdot\nabla_x\rho+\gamma\beta^{-1}\Delta_v\rho+\nabla_v\cdot\!\bigl((\nabla_xV+\gamma v)\rho\bigr).
\end{equation}
Hence \eqref{eq:kinetic-FP} can be rewritten compactly as $\partial_t\rho=\mathcal L^*\rho$.

We now state our assumption on $V$ and the phase space weight.
\begin{assumption}
\label{ass:kinetic-structure}
Assume that \(V\in C^3(\R^d)\) is bounded from below and satisfies
\begin{equation}\label{eq:V-growth-assumption}
\|D^2V\|_{L^\infty(\R^d)}<\infty,
\qquad
|D^\mu V(x)|\le C_\mu(1+|x|)^{m_\mu}, \quad  |\mu|=3,
\end{equation}
for some constants $C_\mu,m_\mu\ge0$. Fix $\lambda>0$ and define $\phi_{\mathrm{ki}}^\lambda$ by \eqref{eq:phi-kin}.
For $0<\theta<\bar\theta$, define the base and stronger phase space weights by
\begin{equation}\label{eq:W-kin}
W(x,v):=\exp\!\bigl(\theta(1+|x|^2+|v|^2)\bigr), \qquad W_+(x,v):=\exp\!\bigl(\bar\theta(1+|x|^2+|v|^2)\bigr).
\end{equation}
\end{assumption}

We remark that the assumptions on \(V\) are mainly used to obtain a simple weight \(W\) with quadratic exponent, which is convenient for the analysis. For more general potentials, similar arguments may apply with a different choice of \(W\). The essential requirements are the estimates in Lemmas~\ref{lem:weighted-ratio-bound-kin} and~\ref{lem:weighted-Taylor-psif-kin}. The numerical experiments below also suggest that Gaussian-mollified variants of the kernel framework can remain effective beyond this specific assumption class.

For convenience, we set
\begin{equation}\label{eq:psi-kin} \psi(x,v):=e^{-\frac{\phi_{\mathrm{ki}}^\lambda(x,v)}{2}}.
\end{equation}
The role of \(\psi\) is to produce the missing drift $-U\cdot \nabla_v f$ in the backward generator. This will be made precise in Lemma~\ref{lem:Doob-h-exact-kin}.

The construction of the kernel formula below relies on two short-time estimates for the reference semigroup $G_h$. The first lemma controls the ratio between $\psi$ and \(G_h\psi\). The second lemma gives a second-order expansion of \(G_h(\psi f)\) relative to \(\psi\). Their proofs are technical and use only the explicit Gaussian kernel together with the growth assumptions on \(V\), so we defer them to Appendix~\ref{app:proof}.

\begin{lemma}[Weighted ratio bound]
\label{lem:weighted-ratio-bound-kin}
Under Assumption~\ref{ass:kinetic-structure}, there exist $h_0>0$ and $C>0$ such that, for every $0<h\le h_0$, one has $0<(G_h\psi)(x,v)<\infty$ for all $(x,v)\in\R^{2d}$ and
\begin{equation}\label{eq:weighted-ratio-bound-kin}
\left\|\frac{G_h\psi}{\psi}\right\|_{L_W^\infty}+\left\|\frac{\psi}{G_h\psi}\right\|_{L_W^\infty}\le C.
\end{equation}
\end{lemma}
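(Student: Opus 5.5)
We will write the reference semigroup probabilistically and reduce both ratios to exponential moments of a Gaussian perturbation. By \eqref{eq:G0-kin}, for $z=(x,v)$ we have
\[
(G_h\psi)(x,v)=\E\bigl[\psi(x+hv+\xi,\;v+\eta)\bigr],
\]
where $(\xi,\eta)$ is a centered Gaussian vector with covariance $\gamma\beta^{-1}\begin{pmatrix}\frac23h^3 I_d & h^2I_d\\ h^2I_d& 2hI_d\end{pmatrix}$. Here we use the convention that $G_h$ acts on functions of the endpoint, evaluated at the starting point. Hence
\[
\frac{G_h\psi}{\psi}(x,v)=\E\Bigl[\exp\!\bigl(-\tfrac12\Delta_h\phi(x,v;\xi,\eta)\bigr)\Bigr],
\]
with
\[
\Delta_h\phi:=\phi_{\mathrm{ki}}^\lambda(x+a,v+b)-\phi_{\mathrm{ki}}^\lambda(x,v),\qquad a:=hv+\xi,\quad b:=\eta .
\]
Positivity of $G_h\psi$ is immediate because the integrand is positive. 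Finiteness, and the upper bound on $G_h\psi/\psi$, will follow from the exponential-moment estimate below. For the reciprocal ratio we use Jensen's inequality, $1/\E[e^{-X}]\le \E[e^{X}]$. Therefore both ratios are bounded by $\E[\exp(\tfrac12|\Delta_h\phi|)]$, and it suffices to show that this quantity is at most $C\,W(x,v)$.

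The key step is a pointwise bound on $|\Delta_h\phi|$ using only $\|D^2V\|_{L^\infty}<\infty$ (the third-derivative growth in Assumption~\ref{ass:kinetic-structure} is not needed here). From the Hessian bound we get $|\nabla V(x)|\le C(1+|x|)$, $|V(x+a)-V(x)|\le C|a|(1+|x|)+C|a|^2$, and $|\nabla V(x+a)-\nabla V(x)|\le C|a|$. We split $\Delta_h\phi$ into three pieces:
\[
\lambda\bigl(V(x+a)-V(x)\bigr),\qquad \tfrac{\beta}{\gamma}\bigl[(v+b)\cdot\nabla V(x+a)-v\cdot\nabla V(x)\bigr],\qquad \tfrac\beta2\bigl(2v\cdot b+|b|^2\bigr).
\]
Estimating each piece with the bounds above, and then inserting $|a|\le h|v|+|\xi|$ together with Young's inequality, we expect
\[
|\Delta_h\phi|\le C_1 h\,(1+|x|^2+|v|^2)+C_1\bigl(|\xi|+|\eta|\bigr)(1+|x|+|v|)+C_1\bigl(|\xi|^2+|\eta|^2\bigr),
\]
where $C_1$ is independent of $h\le 1$ and of $(x,v)$.

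It remains to take Gaussian exponential moments. We apply Cauchy--Schwarz to separate the linear and quadratic noise terms. For the linear terms, since $\mathrm{Var}(\eta)\lesssim h$ and $\mathrm{Var}(\xi)\lesssim h^3$, we have
\[
\E\exp\bigl(c(|\xi|+|\eta|)R\bigr)\le C\exp(C h R^2)\qquad\text{with } R=1+|x|+|v|.
\]
For the quadratic terms, $\E\exp(c(|\xi|^2+|\eta|^2))$ is bounded uniformly once $h$ is small enough that $c\cdot\mathrm{Var}<1/4$. Altogether this gives
\[
\E\bigl[e^{\frac12|\Delta_h\phi|}\bigr]\le C\exp\bigl(C_2h(1+|x|^2+|v|^2)\bigr).
\]
Choosing $h_0$ with $C_2h_0\le\theta$ makes the right-hand side at most $C\,W(x,v)$, which yields \eqref{eq:weighted-ratio-bound-kin}.

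The main obstacle is the mixed term $v\cdot\nabla V(x)$ in $\phi_{\mathrm{ki}}^\lambda$, together with the deterministic shift $hv$ in the position variable. These produce increments that are quadratic in $|z|$, so the ratio cannot be bounded uniformly; it can only be bounded by $\exp(Ch|z|^2)$. This is exactly why the weighted norm with quadratic exponent $\theta$ is needed, and why $h_0$ must depend on $\theta$. The step requiring care is to track that every $|z|$-dependent contribution carries a factor $h$, either explicitly or through the variance of $(\xi,\eta)$, so that the exponent is absorbed by $\theta$.
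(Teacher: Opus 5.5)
Your proof is correct. For the upper bound on $G_h\psi/\psi$ it follows the paper: the same Gaussian representation of $G_h$, the same increment estimate for $\phi_{\mathrm{ki}}^\lambda$ using only $\|D^2V\|_{L^\infty}<\infty$, and Gaussian integration. The paper works with standardized noise, so the factor $h^{1/2}$ in front of $(1+|y|+|w|)|\eta|$ appears explicitly, whereas you carry it in the variance of $(\xi,\eta)$; the two are equivalent.

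The genuine difference is the reciprocal ratio $\psi/G_h\psi$. The paper restricts the Gaussian integral to the bounded noise set $\{|\xi|\le 1,\ |\eta|\le 1\}$. There it uses Young's inequality with a small $\varepsilon$ to bound the increment of $\phi_{\mathrm{ki}}^\lambda$ by $2\varepsilon(1+|y|^2+|w|^2)+C_\varepsilon h$, which gives a lower bound $c\,e^{-2\varepsilon(1+|y|^2+|w|^2)}$. This requires $2\varepsilon<\theta$, with $h_0$ chosen only after $\varepsilon$.

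Your Jensen step $1/\E[e^{-X}]\le e^{\E X}\le \E[e^{X}]$ instead reuses the exponential moment you have already estimated. So a single computation controls both ratios, and you get the $h$-dependent bound $e^{Ch(1+|x|^2+|v|^2)}$ for $\psi/G_h\psi$ rather than $e^{2\varepsilon(1+|x|^2+|v|^2)}$. Jensen needs $\E|X|<\infty$, which your moment bound supplies.

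The paper's localization argument has one advantage: it only needs a one-sided bound on the increment over a bounded set. It would therefore still work in settings where $e^{\frac12\Delta_h\phi}$ fails to be integrable. Your route can be made equally robust by stopping at $e^{\E X}$ and bounding $\E[\Delta_h\phi]$ directly, instead of passing to $\E[e^{\frac12|\Delta_h\phi|}]$.
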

 
\begin{lemma}[Weighted Taylor expansion]\label{lem:weighted-Taylor-psif-kin}
For $m\ge 0$, set
\[
W_m(x,v):=(1+|x|+|v|)^m W(x,v), \qquad W_{m}^{+}(x,v):=(1+|x|+|v|)^m W_+(x,v).
\]
Under Assumption~\ref{ass:kinetic-structure}, there exist an integer $m\ge0$ and a constant $C>0$ such that, for every $0<h\le h_0$, we have
\begin{equation}\label{eq:weighted-Taylor-psif-kin}
\left\|\frac{G_h(\psi f)-\psi f-hD_0(\psi f)}{\psi}\right\|_{L_{W_{m}^{+}}^\infty}\le Ch^2\|f\|_{C_W^4}, \qquad f\in C_W^4(\R^{2d}).
\end{equation}
As a special case of \eqref{eq:weighted-Taylor-psif-kin} with $f= 1$, we obtain
\begin{equation}\label{eq:weighted-Taylor-psi-kin}
\left\|\frac{G_h\psi-\psi-hD_0\psi}{\psi}\right\|_{L_{W_{m}^{+}}^\infty}\le Ch^2.
\end{equation}
\end{lemma}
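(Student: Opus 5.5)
We will prove \eqref{eq:weighted-Taylor-psif-kin} by a second-order Taylor expansion along the explicit Gaussian transition of \eqref{eq:base-SDE-kin}, using a probabilistic representation of the semigroup. Set $g:=\psi f$. Then $(G_hg)(x,v)=\E[g(X_h,V_h)]$, where
\[
X_h=x+hv+\sqrt{2\gamma\beta^{-1}}\int_0^hB_s\,ds,\qquad V_h=v+\sqrt{2\gamma\beta^{-1}}B_h .
\]
Since $D_0$ generates $G_h$, Dynkin's formula applied twice gives
\[
G_hg-g-hD_0g=\int_0^h\!\!\int_0^s G_r(D_0^2g)\,dr\,ds .
\]
Applying it twice is justified by first working with $C_c^\infty$ cutoffs and then passing to the limit, using the Gaussian tails against the growth of $g$. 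The claim therefore reduces to the pointwise bound
\[
|G_r(D_0^2 g)(x,v)|\le C\,\psi(x,v)\,W_m^+(x,v)\,\|f\|_{C_W^4},\qquad 0\le r\le h_0 ,
\]
which then yields the factor $h^2/2$.

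\textbf{Step 1: derivatives of $g$.} $D_0^2 g$ involves $x$-derivatives of $g$ up to order $2$ and $v$-derivatives up to order $4$, with polynomial coefficients in $v$. By Leibniz, each term is $\psi\,P\,\partial^\alpha f$ with $|\alpha|\le4$. Here $P$ is a polynomial in derivatives of $\phi:=\phi_{\mathrm{ki}}^\lambda$ and in $v$.

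Derivatives of $\phi$ of order $\ge1$ in $x$ bring $\nabla V$, $D^2V$, $D^3V$ (the latter times $v$). The derivatives $\partial_x^2\phi$ involve $v\cdot\nabla^3V$, which grows polynomially by \eqref{eq:V-growth-assumption}. Moreover $|\nabla V(x)|\le C(1+|x|)$ by boundedness of $D^2V$. Only derivatives of $\phi$ up to third order appear, because $D_0^2$ takes at most two $x$-derivatives and $\phi$ is quadratic in $v$; so $V\in C^3$ suffices. Hence
\[
|D_0^2g|\le C\,\psi\,(1+|x|+|v|)^{m'}\,W\,\|f\|_{C_W^4}
\]
for some $m'$.

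\textbf{Step 2: transfer through $G_r$ (the main obstacle).} We must show
\[
G_r\bigl(\psi(1+|\cdot|)^{m'}W\bigr)(x,v)\le C\,\psi(x,v)\,(1+|x|+|v|)^{m}\,W_+(x,v)
\]
uniformly for $r\le h_0$. This is where the weaker weight $W$ must be upgraded to $W_+$ with $\bar\theta>\theta$.

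Write the ratio
\[
\frac{\psi(X_r,V_r)}{\psi(x,v)}=\exp\Bigl(-\tfrac12\bigl[\phi(X_r,V_r)-\phi(x,v)\bigr]\Bigr).
\]
By Taylor expansion, using $|\nabla V|\lesssim 1+|x|$, $|D^2V|\le C$ and $V$ bounded below, the exponent is bounded by
\[
C\bigl(1+|x|+|v|\bigr)\bigl(|X_r-x|+|V_r-v|\bigr)+C\bigl(|X_r-x|+|V_r-v|\bigr)^2 ,
\]
plus lower-order polynomial terms. Some care is needed with the term $\tfrac{\beta}{\gamma}v\cdot\nabla V$, where the cross product yields $|v|\,|x|$-type growth; the $D^3V$ remainder is handled by the $(1+|x|+|v|)^{m}$ slack.

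The increments are $|X_r-x|\le r|v|+O(r^{3/2})|\xi|$ and $|V_r-v|=O(r^{1/2})|\xi|$, with $\xi$ standard Gaussian. Hence the exponent is at most $Cr(|x|^2+|v|^2)+C r^{1/2}(1+|x|+|v|)|\xi|+Cr|\xi|^2$.

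The Gaussian expectation of $e^{C r^{1/2}a|\xi|+Cr|\xi|^2}$ is finite for $r$ small and bounded by $e^{C' r a^2}$. Likewise $W(X_r,V_r)\le W(x,v)\,e^{\theta(\cdots)}$ is handled in the same way. Altogether we obtain a factor $\exp(C''h_0(1+|x|^2+|v|^2))$. Choosing $h_0$ so small that $\theta+C''h_0\le\bar\theta$ gives the bound by $W_+$, and the polynomial factors are absorbed into $(1+|x|+|v|)^m$. This is exactly why the stronger weight appears on the left side of \eqref{eq:weighted-Taylor-psif-kin}. The same computation with $f\equiv1$ gives \eqref{eq:weighted-Taylor-psi-kin}, and positivity and finiteness of $G_h\psi$ follow as in Lemma~\ref{lem:weighted-ratio-bound-kin}.
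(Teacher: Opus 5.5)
Your proposal is correct and follows essentially the same route as the paper. Both use Dynkin's formula applied twice, giving $\int_0^h (h-s)\,G_s D_0^2(\psi f)\,ds$ and justified by cutoffs $\chi_R\,\psi f$ plus dominated convergence, together with the polynomial-growth bound $|D_0^2(\psi f)|\le C\,\psi\,W_m\|f\|_{C_W^4}$ and propagation of $W_m\psi$ through the Gaussian semigroup, where the factor $e^{CsQ}$ is absorbed by upgrading $\theta$ to $\bar\theta$ for small $h_0$. One correction: the increment of $\phi_{\mathrm{ki}}^\lambda$ needs only the mean-value theorem with $\|D^2V\|_{L^\infty}<\infty$, so no $D^3V$ remainder arises; drop that aside, because a polynomially growing term in the exponent could not have been absorbed by polynomial slack in the prefactor.
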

 
\subsection{Kernel formula from reference semigroup}
\label{subsec:kernel-splitting}

To derive the kernel formula, the key observation is that the effect of the perturbation term $U\cdot\nabla_v f$ in the backward generator in \eqref{eq:L-kin} can be captured by applying the reference semigroup to the weighted function \(\psi f\) and then normalizing by \(G_h\psi\). We emphasize that $\psi$ is generally different from the invariant state of the kinetic Fokker-Planck equation; here it serves as the weight that encodes the gradient perturbation.

We begin with the following algebraic identity and the associated short-time expansion that underlie the kernel formula.

\begin{lemma}
\label{lem:Doob-h-exact-kin}
Let $f\in C_W^4(\R^{2d})$ and define
\begin{equation}\label{eq:Dpsi-def-kin}
D_\psi f:=\frac{D_0(\psi f)}{\psi}-f\,\frac{D_0\psi}{\psi}.
\end{equation}
Then $D_\psi$ is well defined on $C_W^2(\R^{2d})$ and satisfies
\begin{equation}\label{eq:Dpsi-L-identity-kin}
D_\psi f=D_0f+2(\gamma\beta^{-1}\nabla_v\log\psi)\cdot\nabla_v f=D_0f-U\cdot\nabla_v f=\mathcal Lf.
\end{equation}
Moreover, for \(0<h\le h_0\), the following normalized ratio approximates $f + hD_{\psi}f$
\begin{equation}
\label{eq:approx_Dpsi-kin}
\frac{G_h(\psi f)}{G_h\psi}=f+h D_{\psi} f+R(h,x,v)= f+h\,\mathcal L f+R(h,x,v), 
\end{equation}
with \(\|R(h,\cdot)\|_{L_{W_{+,2m}}^\infty}\le C h^2\|f\|_{C_W^4}\), where
\[
W_{+,2m}(x,v):=(1+|x|+|v|)^{2m}W(x,v)^2W_+(x,v).
\]
\end{lemma}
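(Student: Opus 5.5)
The proof splits into two parts: an algebraic identity for $D_\psi$, and a short-time expansion that follows from Lemmas~\ref{lem:weighted-ratio-bound-kin} and~\ref{lem:weighted-Taylor-psif-kin}.

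\emph{Algebraic identity.} I would expand $D_0(\psi f)$ using the product rule for the first-order transport part and the Leibniz rule for the Laplacian in $v$:
\[
D_0(\psi f)=\psi D_0 f+f D_0\psi+2\gamma\beta^{-1}\nabla_v\psi\cdot\nabla_v f .
\]
The transport term $v\cdot\nabla_x$ contributes no cross term. Dividing by $\psi>0$ and subtracting $fD_0\psi/\psi$ gives
\[
D_\psi f=D_0f+2\gamma\beta^{-1}\nabla_v\log\psi\cdot\nabla_v f .
\]
Since $\log\psi=-\phi^\lambda_{\mathrm{ki}}/2$ and $\gamma\beta^{-1}\nabla_v\phi^\lambda_{\mathrm{ki}}=U$, the cross term equals $-U\cdot\nabla_v f$, so $D_\psi f=\mathcal L f$. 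Well-definedness on $C^2_W$ is immediate: $\psi$ is $C^2$ (because $V\in C^3$) and strictly positive, so only derivatives of $f$ up to order two enter.

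\emph{Expansion.} For $h\le h_0$, Lemma~\ref{lem:weighted-ratio-bound-kin} guarantees $0<G_h\psi<\infty$, so the ratio is defined. Set
\[
E_f:=G_h(\psi f)-\psi f-hD_0(\psi f),\qquad E_1:=G_h\psi-\psi-hD_0\psi .
\]
Then
\[
\frac{G_h(\psi f)}{G_h\psi}-f-hD_\psi f
=\frac{\psi f+hD_0(\psi f)+E_f-(f+hD_\psi f)\bigl(\psi+hD_0\psi+E_1\bigr)}{G_h\psi}.
\]
The zeroth-order terms cancel. The first-order terms also cancel, because by the definition of $D_\psi$,
\[
hD_0(\psi f)-hfD_0\psi-h\psi D_\psi f=0 .
\]
What remains is
\[
R=\frac{\psi}{G_h\psi}\left[\frac{E_f}{\psi}-f\frac{E_1}{\psi}-h^2 (D_\psi f)\frac{D_0\psi}{\psi}-h(D_\psi f)\frac{E_1}{\psi}\right].
\]

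\emph{Weighted bounds.} I would estimate each factor and multiply the weights:
\begin{itemize}[label={}]
\item The prefactor $\psi/G_h\psi$ is bounded by $CW$, by Lemma~\ref{lem:weighted-ratio-bound-kin}.
\item The terms $E_f/\psi$ and $E_1/\psi$ are bounded by $Ch^2W_m^+$ times $\|f\|_{C^4_W}$ and $1$ respectively, by Lemma~\ref{lem:weighted-Taylor-psif-kin}.
\item $|f|\le \|f\|_{C^4_W}W$.
\item $|D_\psi f|\lesssim (1+|x|+|v|)^{a}W\|f\|_{C^2_W}$, since $U$ grows linearly because $D^2V$ is bounded.
\item $D_0\psi/\psi$ is polynomially bounded. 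It is a combination of $\nabla_x\phi$, $\nabla_v\phi$, $|\nabla_v\phi|^2$ and $\Delta_v\phi$, which involve at most $D^2V$ times $v$ and $|\nabla V|^2$, hence grow at most quadratically.
\end{itemize}
Collecting these, each term is bounded by $Ch^2\|f\|_{C^4_W}(1+|x|+|v|)^{2m}W^2W_+$, after enlarging $m$ to absorb the polynomial factors (using $W\le W_+$, $W\ge1$, and $h\le h_0$ for the $h^3$ term). This gives the claimed bound in $L^\infty_{W_{+,2m}}$.

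\emph{Main obstacle.} The delicate step is the weight bookkeeping. One must check that every product of factors is indeed dominated by $W^2W_+$ times a polynomial, and the exponent $m$ may need to be taken larger than the $m$ of Lemma~\ref{lem:weighted-Taylor-psif-kin}. The analytic content, by contrast, is entirely supplied by the two preceding lemmas.
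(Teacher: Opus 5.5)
Your proposal is correct and follows essentially the same route as the paper: the same product-rule computation for $D_\psi f$, and the same exact remainder identity. Your $E_f/\psi$, $E_1/\psi$, $D_\psi f$ and $D_0\psi/\psi$ are the paper's $r_1$, $r_2$, $a_\psi-fb_\psi$ and $b_\psi$. The weight bookkeeping also matches, using $\psi/G_h\psi\le CW$ together with enlarging $m$ to absorb the polynomial growth of $\mathcal L f$ and $D_0\psi/\psi$.
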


\begin{proof}
Since $\psi>0$, the expression \eqref{eq:Dpsi-def-kin} is pointwise well defined. Expanding $D_0(\psi f)$ using the product rule and the fact that diffusion acts only in the $v$-variables, we obtain
\[
D_0(\psi f)=v\cdot\nabla_x(\psi f)+\gamma\beta^{-1}\Delta_v(\psi f)=\psi\,D_0f+f\,D_0\psi+2\gamma\beta^{-1}\nabla_v\psi\cdot\nabla_v f.
\]
Dividing by $\psi$ and subtracting $f\,D_0\psi/\psi$ gives
\[
D_\psi f=D_0f+2\gamma\beta^{-1}\nabla_v(\log\psi)\cdot\nabla_v f=D_0f+2(\gamma\beta^{-1}\nabla_v\log\psi)\cdot\nabla_v f.
\]
Since $\psi=e^{-\phi_{\mathrm{ki}}^\lambda/2}$,
\[
2\gamma\beta^{-1}\nabla_v\log\psi=-\gamma\beta^{-1}\nabla_v\phi_{\mathrm{ki}}^\lambda=-U,
\]
which proves \eqref{eq:Dpsi-L-identity-kin}.

For the approximation \eqref{eq:approx_Dpsi-kin}, the weighted semigroup expansions \eqref{eq:weighted-Taylor-psif-kin}-\eqref{eq:weighted-Taylor-psi-kin} yield
\[
\frac{G_h(\psi f)}{\psi}= f+ha_{\psi}+r_1(h), \qquad \frac{G_h\psi}{\psi}= 1+hb_{\psi}+r_2(h),
\]
with $a_{\psi}:={D_0(\psi f)}/{\psi}$, $b_{\psi}:={D_0\psi}/{\psi}$,
and
\begin{equation}
\label{eq:r1r2h}
\|r_1(h)\|_{L_{W_{m}^{+}}^\infty}\le C h^2\|f\|_{C_W^4}, \qquad \|r_2(h)\|_{L_{W_{m}^{+}}^\infty}\le C h^2.
\end{equation}
Hence
\[
\frac{G_h(\psi f)}{G_h\psi}=\frac{f+ha_{\psi}+r_1(h)}{1+hb_{\psi}+r_2(h)}.
\]
A direct computation gives the exact identity
\begin{equation}
\label{eq:Ghs-errors}
R:=\frac{G_h(\psi f)}{G_h\psi}-(f+hD_{\psi}f)=\frac{r_1(h)-f\,r_2(h)-h(a_{\psi}-fb_{\psi})r_2(h)-h^2b_{\psi}(a_{\psi}-fb_{\psi})}{1+hb_{\psi}+r_2(h)}.
\end{equation}
Since
\[
\frac{1}{1+hb_{\psi}+r_2(h)}=\frac{\psi}{G_h\psi},
\]
the weighted ratio bound \eqref{eq:weighted-ratio-bound-kin} gives
\[
\frac{\psi(x,v)}{G_h\psi(x,v)}\le C\,W(x,v).
\]

Moreover, \(f\in C_W^4\) implies \(f\in L_W^\infty\). Since $a_{\psi}-fb_{\psi}=D_\psi f=\mathcal L f$, and since the coefficients of $\mathcal L$ have at most polynomial growth, there exists an integer \(m\) such that
\[
|f(x,v)|\le C W(x,v)\|f\|_{C_W^4},
\qquad
|a_{\psi}(x,v)-f(x,v)b_{\psi}(x,v)| \le C W_m(x,v)\|f\|_{C_W^4}.
\]
 
On the other hand, \(b_{\psi}=D_0\psi/\psi\) depends only on \(\psi\), and by direct differentiation of \(\psi=e^{-\phi_{\mathrm{ki}}^\lambda/2}\), $b_{\psi}$ has at most polynomial growth. After enlarging \(m\) if necessary, we have
\[
|b_{\psi}(x,v)|\le C(1+|x|+|v|)^m.
\]
Using the bounds on \(r_1(h)\) and \(r_2(h)\) in \eqref{eq:r1r2h}, we obtain
\[
|r_1(h)|\le C h^2 W_{m}^{+}\|f\|_{C_W^4}, \quad |f\,r_2(h)|\le C h^2 W\,W_{m}^{+}\|f\|_{C_W^4}.
\]
Combining the above gives
\begin{align*}
&|(a_{\psi}-fb_{\psi})r_2(h)|\le C h^2 W_mW_{m}^{+}\|f\|_{C_W^4}, \\ &|b_{\psi}(a_{\psi}-fb_{\psi})|\le C (1+|x|+|v|)^mW_m\|f\|_{C_W^4}.
\end{align*}
Since \(h\le h_0\le 1\) and \(W\le W_+\), all numerator terms in \eqref{eq:Ghs-errors} are pointwise bounded by
\[
C h^2\,\frac{W_{+,2m}(x,v)}{W(x,v)}\,\|f\|_{C_W^4}.
\]
Multiplying by \(\psi/G_h\psi\le C W\) from the denominator yields the desired estimate in  \eqref{eq:approx_Dpsi-kin}
\[
\|R(h,\cdot)\|_{L_{W_{+,2m}}^\infty}\le C h^2\|f\|_{C_W^4}.
\]
\end{proof}

Lemma~\ref{lem:Doob-h-exact-kin} shows that the ratio \(G_h(\psi f)/(G_h\psi)\) gives a first-order approximation of the density evolution generated by \(\mathcal L\). This suggests replacing the infinitesimal action of $\mathcal L$ by the explicit ratio $G_h(\psi f)/(G_h\psi)$, which leads to a one-step kernel formula to approximate the evolution of the phase space densities. For the forward-Euler discretization of \eqref{eq:kinetic-FP} with step size $h$, we have $\rho_{k+1}=\rho_k+h\mathcal L^*\rho_k$. Then by duality,
\begin{align*}
&\int_{\R^{2d}} f\,\rho_{k+1}\,dx\,dv-\int_{\R^{2d}} f\,\rho_k\,dx\,dv  = \int_{\R^{2d}}\left[\frac{G_h(\psi f)}{G_h\psi}-f\right]\rho_k\,dx\,dv+O(h^2) \\
=&\int_{\R^{2d}}f(x,v)\psi(x,v)\int_{\R^{2d}}G_h(x,v\mid y,w)\frac{\rho_k(y,w)}{(G_h\psi)(y,w)}\,dy\,dw\,dx\,dv\\&- \int_{\R^{2d}}f(x,v)\rho_k(x,v)\,dx\,dv+O(h^2).
\end{align*}

Neglecting the local \(O(h^2)\) term in this weak identity motivates the following definition.

\begin{definition}[One-step kinetic kernel operator]
\label{def:kernel-operator-kin}
Let $h_0$ be as in Lemma~\ref{lem:weighted-ratio-bound-kin}. For $0<h\le h_0$, $\rho\in L^1(\R^{2d})$, $\rho\ge0$, $G_h$ in \eqref{eq:G0-kin}, and $\psi$ in \eqref{eq:psi-kin}, the kernel formula for the kinetic Fokker-Planck equation is
\begin{equation}\label{eq:kernel-kin}
(\mathcal K_h\rho)(x,v):=\psi(x,v)\int_{\R^{2d}}G_h(x,v\mid y,w)\frac{\rho(y,w)}{(G_h\psi)(y,w)}\,dy\,dw.
\end{equation}
\end{definition}

Lemma~\ref{lem:weighted-ratio-bound-kin} ensures that the denominator is finite and strictly positive for $0<h\le h_0$. Since the integrand is nonnegative, Tonelli's theorem gives, for every $\rho\ge0$ in $L^1(\R^{2d})$,
\begin{align*}
&\int_{\R^{2d}}(\mathcal K_h\rho)(x,v)\,dx\,dv
=\int_{\R^{2d}}\psi(x,v)\int_{\R^{2d}}G_h(x,v\mid y,w)\frac{\rho(y,w)}{(G_h\psi)(y,w)}\,dy\,dw\,dx\,dv \\
&=\int_{\R^{2d}}\frac{\rho(y,w)}{(G_h\psi)(y,w)}\int_{\R^{2d}}G_h(x,v\mid y,w)\psi(x,v)\,dx\,dv\,dy\,dw=\int_{\R^{2d}}\rho(y,w)\,dy\,dw.
\end{align*}
Because the right-hand side is finite, $\mathcal K_h\rho$ is finite almost everywhere, is nonnegative, belongs to $L^1(\R^{2d})$, and preserves mass. More generally, the same formula may be used for $h>h_0$ whenever $0<(G_h\psi)(y,w)<\infty$ for every $(y,w)$. For every bounded measurable $f$, absolute integrability and Fubini's theorem give the dual formula
\begin{equation}\label{eq:dual-kernel-kin}
\int_{\R^{2d}} f\,(\mathcal K_h\rho)\,dx\,dv=\int_{\R^{2d}}\frac{G_h(\psi f)(y,w)}{G_h\psi(y,w)}\,\rho(y,w)\,dy\,dw.
\end{equation}
For an unbounded measurable test function \(f\), the same identity follows by truncation whenever
\[
\int_{\R^{2d}}\frac{G_h(\psi|f|)}{G_h\psi}\rho<\infty,
\]
or equivalently whenever \(\int_{\R^{2d}}|f|\,\mathcal K_h\rho<\infty\). The weighted semigroup and moment bounds used below ensure this condition for the test functions appearing in the consistency proof.

Weak consistency of the kernel formula follows directly from Lemma~\ref{lem:Doob-h-exact-kin} as in the following proposition.  

\begin{proposition}[Weak consistency]
\label{prop:kernel-kin}
For every $T>0$, there exist $h_T\in(0,\min\{h_0,T\}]$ and a constant $C_T>0$ such that for every $f\in C_W^4(\R^{2d})$, every probability density $\rho$ satisfying $m_{W_{+,2m}}(\rho)<\infty$, and every $h\in(0,h_T]$,
\begin{equation}\label{eq:weak-error-kin}
\left|\int_{\R^{2d}}\left[f\,(\mathcal K_h\rho)-f\,\rho-h(\mathcal L f)\rho\right]\,dx\,dv\right|\le C_T h^2\|f\|_{C_W^4}\,m_{W_{+,2m}}(\rho).
\end{equation}
In particular, on sets of probability densities with uniformly bounded \(W_{+,2m}\)-moment, \(\mathcal K_h\) is weakly consistent with \(\mathcal L^*\), with one-step consistency residual of order \(h^2\).
\end{proposition}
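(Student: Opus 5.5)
The plan is to reduce \eqref{eq:weak-error-kin} to the pointwise expansion of Lemma~\ref{lem:Doob-h-exact-kin} via the dual formula \eqref{eq:dual-kernel-kin}. Take $h_T:=\min\{h_0,T\}$ (if $h_0$ may exceed $1$, also cap it at $1$, as the proof of Lemma~\ref{lem:Doob-h-exact-kin} implicitly uses $h\le1$). Fix $f\in C_W^4$ and a probability density $\rho$ with $m_{W_{+,2m}}(\rho)<\infty$.

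\textbf{Step 1: justify the dual formula for unbounded $f$.} Since $f\in C_W^4$ is typically unbounded, \eqref{eq:dual-kernel-kin} first needs the integrability condition stated after Definition~\ref{def:kernel-operator-kin}, namely $\int \frac{G_h(\psi|f|)}{G_h\psi}\rho<\infty$. Using $|f|\le \|f\|_{C_W^4}W$ and positivity of $G_h$, we have $G_h(\psi|f|)\le \|f\|_{C_W^4}G_h(\psi W)$. I then need a pointwise bound $G_h(\psi W)/G_h\psi\lesssim W_{+,2m}$.

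The cleanest route applies Lemma~\ref{lem:weighted-Taylor-psif-kin} with $f$ replaced by $W$ itself. This requires $W\in C_W^4$, which fails: derivatives of $W$ carry polynomial factors $(1+|x|+|v|)^k$. To get around this, apply the lemma instead to a weight $\tilde W=\exp(\theta'(1+|x|^2+|v|^2))$ with $\theta<\theta'$ slightly smaller than the gap to $\bar\theta$. Then $\tilde W\in C_{W'}^4$ for any $W'$ with a slightly larger exponent. Alternatively, estimate the Gaussian integral $G_h(\psi W)$ directly, as in the appendix proof of Lemma~\ref{lem:weighted-ratio-bound-kin}: completing the square shows $G_h(\psi W)\le C\psi W_+$ for small $h$. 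Combined with $\psi/G_h\psi\le CW$, this gives the bound $\le CW^2W_+\le CW_{+,2m}$, which is integrable against $\rho$. A truncation argument (monotone convergence on $f\mathbf 1_{|f|\le n}$ split into positive and negative parts) then yields the identity.

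\textbf{Step 2: insert the expansion.} By \eqref{eq:dual-kernel-kin},
\[
\int f\,\mathcal K_h\rho-\int f\rho-h\int(\mathcal Lf)\rho=\int\Bigl[\frac{G_h(\psi f)}{G_h\psi}-f-h\mathcal Lf\Bigr]\rho=\int R(h,\cdot)\rho,
\]
by \eqref{eq:approx_Dpsi-kin}. Each term is integrable: $|f|\le CW$, and $|\mathcal Lf|\le CW_m\|f\|_{C_W^4}$ as shown in the proof of that lemma. Both are dominated by $W_{+,2m}$ because $W\ge1$.

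\textbf{Step 3: conclude.} Since $|R|\le Ch^2\|f\|_{C_W^4}W_{+,2m}$, integrating against $\rho$ gives \eqref{eq:weak-error-kin} with $C_T=C$, independent of $T$ beyond the restriction $h\le h_T$.

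The main obstacle is Step 1: controlling $G_h(\psi W)/G_h\psi$ by a weight of the form $W_{+,2m}$. This requires a Gaussian tail computation, since $\psi$ grows like $e^{-\frac{\beta}{4}|v|^2-\frac{\beta}{2\gamma}v\cdot\nabla V}$, which may increase in some directions, and $G_h$ spreads mass over scales $h^{1/2}$ in $v$ and $h^{3/2}$ in $x$. I would rely on $\bar\theta>\theta$ and on $h$ being small so that the quadratic exponents stay controlled. This should mirror the technique behind Lemma~\ref{lem:weighted-ratio-bound-kin}.
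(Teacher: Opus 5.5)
Your proposal is correct and follows the paper's proof: the dual formula \eqref{eq:dual-kernel-kin} extended to unbounded $f$ by truncation, then the pointwise expansion of Lemma~\ref{lem:Doob-h-exact-kin} integrated against $\rho$, with $C_T=C$. Your Step~1 makes explicit the integrability that the paper only asserts, and the bound you need there, $G_h(\psi W)\le C\psi W_+$, is exactly the appendix estimate \eqref{eq:Gh-psim-bound-kin} with $m=0$, so you can cite it rather than redo the Gaussian tail computation.
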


\begin{proof}
The weighted semigroup estimate in Lemma~\ref{lem:Doob-h-exact-kin} and the assumed moment bound make the terms below absolutely integrable, so the dual formula \eqref{eq:dual-kernel-kin} applies by truncation:
\[
\int_{\R^{2d}} f\,(\mathcal K_h\rho)\,dx\,dv-\int_{\R^{2d}} f\,\rho\,dx\,dv=\int_{\R^{2d}}\left[\frac{G_h(\psi f)}{G_h\psi}-f\right]\rho\,dx\,dv.
\]
By Lemma~\ref{lem:Doob-h-exact-kin},
\[
\frac{G_h(\psi f)}{G_h\psi}=f+h\mathcal L f+R(h,\cdot), \qquad \|R(h,\cdot)\|_{L_{W_{+,2m}}^\infty}\le C_T h^2\|f\|_{C_W^4}.
\]
Substituting this into the previous identity and using
\[
\int_{\R^{2d}}|R(h,z)|\rho(z)\,dz\le \|R(h,\cdot)\|_{L_{W_{+,2m}}^\infty}\,m_{W_{+,2m}}(\rho),
\]
we obtain \eqref{eq:weak-error-kin}.
\end{proof}

Next, we state the backward regularity and moment hypotheses needed for the global error estimate. These hypotheses are not consequences of Assumption~\ref{ass:kinetic-structure} established in this paper. Conditional on them, one-step weak consistency implies finite-time weak convergence of the iterated exact-normalization kernel scheme to the solution of the kinetic Fokker-Planck equation.
\begin{theorem}[Conditional weak convergence for exact normalization]
\label{thm:global-weak-kin}
Let $\rho_0$ be a probability density satisfying $m_{W_{+,2m}}(\rho_0)<\infty$ and fix $T>0$. For each $0<h\le h_0$, set $\rho_0^h:=\rho_0$ and define $\rho_{k+1}^h:=\mathcal K_h\rho_k^h$ for $k\ge0$. Suppose there exists a solution $\rho(t,\cdot)$ for the kinetic Fokker-Planck equation~\eqref{eq:kinetic-FP} with initial data $\rho_0$. Assume that for every $f\in C_W^4(\R^{2d})$, the backward equation
\[
\partial_t u=\mathcal L u, \qquad u(0,\cdot)=f,
\]
admits a classical solution on $[0,T]\times\R^{2d}$ satisfying
\begin{equation}\label{eq:backward regularity-global-kin}
\sup_{0\le t\le T}\|u(t,\cdot)\|_{C_W^4}+\sup_{0\le t\le T}\|\partial_{tt}u(t,\cdot)\|_{L_{W_{+,2m}}^\infty}\le C_T\|f\|_{C_W^4}.
\end{equation}
Assume also that the forward and backward solutions satisfy the duality relation
\begin{equation}\label{eq:backward-forward-duality-kin}
\int_{\R^{2d}} f(z)\rho(t,z)\,dz=\int_{\R^{2d}}u(t,z)\rho_0(z)\,dz,\qquad 0\le t\le T.
\end{equation}
Assume moreover that there exist $h_T\in(0,\min\{h_0,1\}]$ and $M_T>0$ such that
\begin{equation}\label{eq:weighted-moment-bound-global-kin}
\sup_{0\le t\le T}m_{W_{+,2m}}(\rho(t,\cdot))+\sup_{h\in(0,h_T]}\sup_{k:\,kh\le T}m_{W_{+,2m}}(\rho_k^h)\le M_T.
\end{equation}
The integrals in \eqref{eq:backward-forward-duality-kin} are absolutely convergent by \eqref{eq:backward regularity-global-kin} and \eqref{eq:weighted-moment-bound-global-kin}.
Then there exists a constant $C_T>0$ such that, for every $f\in C_W^4(\R^{2d})$, every $h\in(0,h_T]$, and every integer $n\ge0$ with $nh\le T$,
\begin{equation}\label{eq:global-weak-error-kin}
\left|\int_{\R^{2d}} f(z)\rho_n^h(z)\,dz-\int_{\R^{2d}} f(z)\rho(nh,z)\,dz\right|\le C_T h\|f\|_{C_W^4}.
\end{equation}
\end{theorem}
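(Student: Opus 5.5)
The plan is the standard Lady Windermere's fan (telescoping) argument for weak error, using the backward solution $u$ as a family of test functions. Fix $f\in C_W^4$ and $n$ with $nh\le T$, and let $u$ be the backward solution of $\partial_t u=\mathcal L u$, $u(0)=f$, provided by \eqref{eq:backward regularity-global-kin}. By the duality relation \eqref{eq:backward-forward-duality-kin} at $t=nh$, $\int f\rho(nh)=\int u(nh)\rho_0$, while trivially $\int f\rho_n^h=\int u(0)\rho_n^h$. We telescope over the intermediate quantities
\[
e_k:=\int_{\R^{2d}} u((n-k)h,z)\,\rho_k^h(z)\,dz,\qquad k=0,\dots,n,
\]
so that the error equals $e_n-e_0=\sum_{k=0}^{n-1}(e_{k+1}-e_k)$.

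For each summand, write $g_k:=u((n-k-1)h,\cdot)\in C_W^4$, whose norm is bounded by $C_T\|f\|_{C_W^4}$. Then
\[
e_{k+1}-e_k=\int \bigl[g_k\,\mathcal K_h\rho_k^h-g_k\rho_k^h-h(\mathcal L g_k)\rho_k^h\bigr]
-\int\bigl[u((n-k)h)-g_k-h\,\mathcal L g_k\bigr]\rho_k^h .
\]
The first bracket is bounded by Proposition~\ref{prop:kernel-kin} by $C h^2\|g_k\|_{C_W^4}m_{W_{+,2m}}(\rho_k^h)\le Ch^2\|f\|_{C_W^4}M_T$, using \eqref{eq:weighted-moment-bound-global-kin} for $kh\le T$. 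For the second bracket, set $s=(n-k-1)h$. Since $\mathcal L g_k=\partial_t u(s)$, Taylor's formula in time gives $u(s+h)-u(s)-h\partial_tu(s)=\int_s^{s+h}(s+h-r)\partial_{tt}u(r)\,dr$. This is bounded pointwise by $\tfrac{h^2}{2}\sup_r\|\partial_{tt}u(r)\|_{L^\infty_{W_{+,2m}}}W_{+,2m}\le Ch^2\|f\|_{C_W^4}W_{+,2m}$. Integrating against $\rho_k^h$ and using the moment bound again yields $Ch^2\|f\|_{C_W^4}M_T$. Summing the $n\le T/h$ terms then gives the bound $C_T h\|f\|_{C_W^4}$.

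The main obstacle is justifying the individual steps rather than the algebra itself. First, Proposition~\ref{prop:kernel-kin} must be applied with a test function $g_k$ that is unbounded, so the dual formula \eqref{eq:dual-kernel-kin} must hold for it. I would handle this by truncation, as remarked after \eqref{eq:dual-kernel-kin}. The required integrability $\int |g_k|\mathcal K_h\rho_k^h<\infty$ follows from $|g_k|\le CW\le CW_{+,2m}$ together with the moment bound on $\rho_{k+1}^h=\mathcal K_h\rho_k^h$, which is available when $(k+1)h\le T$. Second, all integrals $e_k$ must be absolutely convergent, which again follows from $|u|\le CW$ and the moment bounds. Third, the Taylor step needs $\partial_t u(s)=\mathcal L u(s)$ pointwise together with continuity of $\partial_{tt}u$ in $t$; I take both as part of the classical-solution hypothesis. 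Finally, $h\le h_T$ ensures that the constants in Proposition~\ref{prop:kernel-kin} and Lemma~\ref{lem:Doob-h-exact-kin} are uniform in $h$.
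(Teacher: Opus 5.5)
Your proposal is correct and is essentially the paper's proof: the same telescoping over $E_k=\int u((n-k)h)\rho_k^h\,dz$, with each increment split into a one-step consistency term bounded via Proposition~\ref{prop:kernel-kin} and a time-Taylor remainder bounded via the $\partial_{tt}u$ estimate and the uniform $W_{+,2m}$-moment bound. Your extra remarks, namely the truncation justification for applying the dual formula to the unbounded test functions $u((n-k-1)h,\cdot)$ and the absolute convergence of the $E_k$, are details the paper leaves implicit.
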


\begin{proof}
Let $u$ solve the backward equation $\partial_tu=\mathcal Lu$ with $u(0,\cdot)=f$, and define
\[
u_k(z):=u((n-k)h,z), \qquad k=0,\dots,n,
\]
so that $u_0(z)=u(nh,z)$ and $u_n=f$. By \eqref{eq:backward-forward-duality-kin},
\[
\int_{\R^{2d}} f(z)\rho(nh,z)\,dz=\int_{\R^{2d}} u(nh,z)\rho_0(z)\,dz=\int_{\R^{2d}} u_0(z)\rho_0(z)\,dz.
\]
Hence
\begin{equation}\label{eq:telescope-kin}
\int_{\R^{2d}} f\,\rho_n^h\,dz-\int_{\R^{2d}} f\,\rho(nh)\,dz=\int_{\R^{2d}} u_n\,\rho_n^h\,dz-\int_{\R^{2d}} u_0\,\rho_0\,dz=\sum_{k=0}^{n-1}(E_{k+1}-E_k),
\end{equation}
where
\[
E_k:=\int_{\R^{2d}}u_k(z)\rho_k^h(z)\,dz.
\]
We split each increment as $E_{k+1}-E_k=A_k+B_k$, where
\[
A_k:=\int_{\R^{2d}} u_{k+1}\rho_{k+1}^h\,dz-\int_{\R^{2d}} u_{k+1}\rho_k^h\,dz-h\int_{\R^{2d}} (\mathcal Lu_{k+1})\rho_k^h\,dz,
\]
and
\[
B_k:=\int_{\R^{2d}}\bigl(u_{k+1}+h\mathcal Lu_{k+1}-u_k\bigr)\rho_k^h\,dz.
\]

Since $\rho_{k+1}^h=\mathcal K_h\rho_k^h$, Proposition~\ref{prop:kernel-kin} applied to the test function $u_{k+1}$ gives
\[
|A_k|\le C_T h^2 \|u_{k+1}\|_{C_W^4}\,m_{W_{+,2m}}(\rho_k^h)\le C_T h^2 \|f\|_{C_W^4},
\]
where we used \eqref{eq:backward regularity-global-kin} and \eqref{eq:weighted-moment-bound-global-kin}.

Next, since $\partial_tu=\mathcal Lu$, Taylor expansion in time gives
\[
u_k=u_{k+1}+h\mathcal Lu_{k+1}+r_k,
\]
with
\[
\|r_k\|_{L_{W_{+,2m}}^\infty}\le \frac{h^2}{2}\sup_{t\in[(n-k-1)h,(n-k)h]}\|\partial_{tt}u(t,\cdot)\|_{L_{W_{+,2m}}^\infty}.
\]
Therefore
\[
u_{k+1}+h\mathcal Lu_{k+1}-u_k=-r_k,
\]
and hence
\[
|B_k|\le \|r_k\|_{L_{W_{+,2m}}^\infty}\,m_{W_{+,2m}}(\rho_k^h)\le C_T h^2\|f\|_{C_W^4},
\]
again by \eqref{eq:backward regularity-global-kin} and \eqref{eq:weighted-moment-bound-global-kin}.

Summing over $k=0,\dots,n-1$ in \eqref{eq:telescope-kin} and using $nh\le T$, we obtain
\[
\left|\int_{\R^{2d}} f\,\rho_n^h\,dz-\int_{\R^{2d}} f\,\rho(nh)\,dz\right|\le \sum_{k=0}^{n-1}(|A_k|+|B_k|)
\le 2C_TT\,h\,\|f\|_{C_W^4}.
\]
Absorbing the factor $2T$ into the constant proves \eqref{eq:global-weak-error-kin}.
\end{proof}

We remark that since the uniform discrete weighted-moment bound in \eqref{eq:weighted-moment-bound-global-kin} is assumed rather than proved, Theorem~\ref{thm:global-weak-kin} is conditional and it applies only to the exact operator with denominator \(G_h\psi\).

\subsection{Kernel formula from optimal control}
\label{subsec:kernel-OC}

The previous subsection shows that the kernel operator is a weakly accurate one-step approximation of the kinetic Fokker-Planck equation. We now give a formal variational derivation of the same kernel formula \eqref{eq:kernel-kin} through a quadratic optimal control problem and the Hopf-Cole transformation.
We emphasize that related variational formulations have been studied extensively for kinetic and non-gradient Fokker-Planck equations \cite{huang2000variational,duong2014conservative,duong2022entropic}. Their numerical realizations often lead to implicit schemes or auxiliary variational subproblems. Here, our purpose is more specific: we record an exact entropy identity for a class of smooth admissible pairs and show that any sufficiently regular solution of the resulting conditional optimality system has terminal density represented by the kernel formula. We do not prove existence of a minimizer or a Lagrange multiplier.

For the remainder of this subsection only, we impose the additional integrability assumption
\[
Z_\lambda:=\int_{\R^{2d}}e^{-\phi_{\mathrm{ki}}^\lambda(x,v)}\,dx\,dv<\infty, \qquad \rho_\phi(x,v):=Z_\lambda^{-1}e^{-\phi_{\mathrm{ki}}^\lambda(x,v)}.
\]
Indeed, completing the square gives
\[
Z_\lambda=\left(\frac{2\pi}{\beta}\right)^{d/2}\int_{\R^d}\exp\!\left(-\lambda V(x)+\frac{\beta}{2\gamma^2}|\nabla_xV(x)|^2\right)\,dx.
\]
Thus, this condition is additional to Assumption~\ref{ass:kinetic-structure} and can fail for superquadratic potentials including some of the sampling examples below. It is used only to define the terminal relative-entropy penalty in the variational interpretation; it is not required for the kernel construction, the weak-consistency and convergence results above, or the sampling experiments in Section~\ref{subsec:kinetic-sampling}.

The next proposition formulates the control problem, gives an exact entropy-transport identity for classically admissible pairs, and records the first-order optimality system under an explicit multiplier hypothesis.
 
\begin{proposition}[Control identity and conditional optimality system]
\label{prop:OC-kin}
Fix $0<h\le h_0$. Call a pair $(\rho,u)$ classically admissible if $\rho>0$, $\rho\in C^1([0,h];C^0(\R^{2d}))\cap C([0,h];C^2(\R^{2d}))$, $u\in C([0,h];C^1(\R^{2d};\R^d))$, $\rho_t\in\mathcal P_W(\R^{2d})$ for all $t\in[0,h]$, and, with $\tilde u:=u+(\gamma/\beta)\nabla_v\log\rho$, the following bounds hold:
\[
\sup_{t\in[0,h]}\int_{\R^{2d}}\rho_t\bigl(1+|x|^2+|v|^2+|\phi_{\mathrm{ki}}^\lambda|+|\log\rho_t|\bigr)\,dx\,dv<\infty,
\]
\[
\int_0^h\!\int_{\R^{2d}}\rho\bigl(|u|^2+|\tilde u|^2+|\nabla_v\log\rho|^2\bigr)\,dx\,dv\,dt<\infty,
\]
\[
\int_0^h\!\int_{\R^{2d}}\left[|\partial_t\rho|(1+|\log\rho|)+\rho(1+|\log\rho|)(|v|+|\tilde u|)+\rho|\tilde u||\nabla_v\log\rho|\right]dx\,dv\,dt<\infty.
\]
Assume in addition that the spatial fluxes decay sufficiently fast for the cutoff boundary terms in the integrations by parts below to vanish. Then the following statements hold.

\begin{enumerate}[label=\textup{(\arabic*)}]
\item \textbf{Quadratic optimal control formulation.}
Consider the problem of minimizing
\begin{equation}\label{eq:J-kin}
J(\rho,u)
:= \frac{\beta}{2\gamma}\int_0^h\!\int_{\R^{2d}}   \|u\|^2\,\rho\,dx\,dv\,dt + \int_{\R^{2d}}\phi_{\mathrm{ki}}^\lambda(x,v)\rho_h(x,v)\,dx\,dv
\end{equation}
over classically admissible pairs $(\rho,u)$, subject to
\begin{equation}\label{eq:state-kin}
\partial_t\rho=-v\cdot\nabla_x\rho+\gamma\beta^{-1}\Delta_v\rho+\nabla_v\cdot(\rho u), \qquad \rho\big|_{t=0}=\rho_0.
\end{equation}

\item \textbf{Equivalent entropy-transport formulation.}
Define
\[
H(\rho):=\int_{\R^{2d}}\rho\log\rho\,dx\,dv,\qquad \mathcal I(\rho):=\int_{\R^{2d}}\|\nabla_v\log\rho\|^2\rho\,dx\,dv.
\]
Then the state equation \eqref{eq:state-kin} becomes
\begin{equation}\label{eq:w-state-kin}
\partial_t\rho=-v\cdot\nabla_x\rho+\nabla_v\cdot(\rho\tilde u),\qquad\rho\big|_{t=0}=\rho_0,
\end{equation}
and the functional $J(\rho,u)$ satisfies the exact identity
\begin{equation}\label{eq:J-entropy-FI-kin}
\widetilde J(\rho,\tilde u):=\frac{\beta}{2\gamma}\int_0^h\!\int_{\R^{2d}} \| \tilde u\|^2\rho\,dx\,dv\,dt+\frac{\gamma}{2\beta}\int_0^h\mathcal I (\rho)\,dt+\KL(\rho_h\|\rho_\phi).
\end{equation}
\begin{equation}\label{eq:J-exact-identity-kin}
J(\rho,u)=\widetilde J(\rho,\tilde u)-H(\rho_0)-\log Z_\lambda.
\end{equation}

\item \textbf{Conditional first-order optimality system.}
If a classically admissible local minimizer of \textup{(1)} admits a sufficiently regular Lagrange multiplier $\Phi$ for the state constraint, and if the first variations and integrations by parts in the multiplier argument are valid, then stationarity under admissible smooth variations with compact spatial support, including terminal density variations tangent to the probability constraint, gives
\begin{equation}\label{eq:coupled-system-kin}
\begin{cases}
\partial_t\rho = D_0^*\rho+\nabla_v\cdot(\rho u),\\[4pt]
-\partial_t\Phi = D_0\Phi+\frac{\gamma}{2\beta}\| \nabla_v\Phi\|^2,\\[4pt]
u=-\frac{\gamma}{\beta}\nabla_v\Phi,\qquad\rho\big|_{t=0}=\rho_0,\qquad\Phi\big|_{t=h}=-\phi_{\mathrm{ki}}^\lambda.
\end{cases}
\end{equation}
If terminal variations are restricted to probability densities, the terminal condition is determined up to an additive constant, $\Phi(h,\cdot)=-\phi_{\mathrm{ki}}^\lambda+C$.
\end{enumerate}

In particular, \textup{(1)} and \textup{(2)} describe the same variational problem up to the explicit constant in \eqref{eq:J-exact-identity-kin}, while \textup{(3)} is conditional on the existence and regularity of a multiplier. No existence or uniqueness assertion for a minimizer or multiplier is made.
\end{proposition}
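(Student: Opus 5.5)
Part (1) is a formulation and needs no argument, so the work is in (2) and (3). For (2), the rewriting of the state equation is algebraic: since $\gamma\beta^{-1}\Delta_v\rho=\nabla_v\cdot(\gamma\beta^{-1}\rho\nabla_v\log\rho)$, substituting $u=\tilde u-(\gamma/\beta)\nabla_v\log\rho$ into \eqref{eq:state-kin} gives \eqref{eq:w-state-kin}. For the energy identity I will expand the kinetic term,
\[
\frac{\beta}{2\gamma}\int\!\!\int\|u\|^2\rho=\frac{\beta}{2\gamma}\int\!\!\int\|\tilde u\|^2\rho-\int\!\!\int\tilde u\cdot\nabla_v\rho+\frac{\gamma}{2\beta}\int_0^h\mathcal I(\rho)\,dt.
\]
The cross term is absolutely integrable by the assumed bound on $\rho|\tilde u||\nabla_v\log\rho|$. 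I then identify the cross term with the entropy change. Differentiating, $\frac{d}{dt}H(\rho_t)=\int(1+\log\rho)\partial_t\rho$. Inserting \eqref{eq:w-state-kin}, the transport part $-\int(1+\log\rho)v\cdot\nabla_x\rho=-\int v\cdot\nabla_x(\rho\log\rho)=0$ is divergence-free in $x$. The remaining part is $\int(1+\log\rho)\nabla_v\cdot(\rho\tilde u)=-\int\tilde u\cdot\nabla_v\rho$. Hence
\[
-\int_0^h\!\!\int\tilde u\cdot\nabla_v\rho=H(\rho_h)-H(\rho_0).
\]
Finally $\int\phi\rho_h=\KL(\rho_h\|\rho_\phi)-H(\rho_h)-\log Z_\lambda$. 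Adding these, the $H(\rho_h)$ terms cancel and we obtain \eqref{eq:J-exact-identity-kin}.

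The main obstacle is the rigorous justification of these integrations by parts. I would use a spatial cutoff $\chi_R(x,v)=\chi(z/R)$ and compute $\frac{d}{dt}\int\chi_R\rho\log\rho$ via the $C^1$-in-time regularity. The integrability hypotheses on $|\partial_t\rho|(1+|\log\rho|)$ and $\rho(1+|\log\rho|)(|v|+|\tilde u|)$ make the commutator terms involving $\nabla\chi_R$ (which are $O(1/R)$ times these densities) tend to zero by dominated convergence. The assumed flux decay handles whatever boundary terms remain. The second-moment and $|\log\rho|$ bounds make $H(\rho_t)$ and $\int\phi\rho_h$ finite, so no $\infty-\infty$ cancellation occurs.

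For (3), I form the Lagrangian
\[
J(\rho,u)+\int_0^h\!\!\int\Phi\bigl(\partial_t\rho-D_0^*\rho-\nabla_v\cdot(\rho u)\bigr),
\]
and integrate by parts in $t,x,v$ against compactly supported variations, using the assumed multiplier regularity. This gives
\[
\int\Phi(h)\delta\rho_h-\int_0^h\!\!\int(\partial_t\Phi+D_0\Phi)\delta\rho+\int_0^h\!\!\int\nabla_v\Phi\cdot(\rho\,\delta u+u\,\delta\rho).
\]
Variation in $u$ gives $\frac{\beta}{\gamma}\rho u+\rho\nabla_v\Phi=0$, hence $u=-\frac{\gamma}{\beta}\nabla_v\Phi$. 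Variation in interior $\rho$ gives
\[
\frac{\beta}{2\gamma}\|u\|^2-\partial_t\Phi-D_0\Phi+u\cdot\nabla_v\Phi=0.
\]
Substituting $u$ yields the Hamilton-Jacobi equation $-\partial_t\Phi=D_0\Phi+\frac{\gamma}{2\beta}\|\nabla_v\Phi\|^2$. Terminal variations give $\phi+\Phi(h)=0$. When the variations are restricted to be mass-zero, this holds only up to a constant.
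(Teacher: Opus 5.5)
Your proposal is correct and follows essentially the same route as the paper. For (2) you use the same expansion of $\|u\|^2$ in terms of $\tilde u$ and $\nabla_v\log\rho$, the same entropy chain rule $\frac{d}{dt}H(\rho_t)=-\int\tilde u\cdot\nabla_v\rho$, and the identity $\int\phi_{\mathrm{ki}}^\lambda\rho_h=\KL(\rho_h\|\rho_\phi)-H(\rho_h)-\log Z_\lambda$. For (3) your interior stationarity condition is exactly the paper's adjoint equation $-\partial_t\Phi=D_0\Phi-u\cdot\nabla_v\Phi-\frac{\beta}{2\gamma}\|u\|^2$ before the feedback law is substituted, and your cutoff argument just spells out the integration-by-parts justification that the paper asserts from the admissibility hypotheses.
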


\begin{proof}
\emph{Entropy identity in \textup{(2)}.}
Writing $u=\tilde u-\frac{\gamma}{\beta}\nabla_v\log\rho$ in \eqref{eq:state-kin} and using $\nabla_v\cdot(\rho \nabla_v\log\rho)  =\Delta_v\rho$ gives \eqref{eq:w-state-kin}. For the cost, the same relation yields
\[
\frac{\beta}{\gamma}\|u\|^2=\frac{\beta}{\gamma}\|\tilde u\|^2-2\tilde u\cdot\nabla_v\log\rho+\frac{\gamma}{\beta}\| \nabla_v\log\rho\|^2,
\]
which implies
\begin{align}
\frac{\beta}{\gamma}\int_0^h\!\!\int_{\R^{2d}}\|u\|^2\rho\,dx\,dv\,dt={}&\int_0^h\!\!\int_{\R^{2d}}\left(\frac{\beta}{\gamma}\|\tilde u\|^2\rho -2 \tilde u\cdot\nabla_v\rho\right)\,dx\,dv\,dt+\frac{\gamma}{\beta}\int_0^h\mathcal I(\rho_t)\,dt .
\label{eq:quad-decomp-kin}
\end{align}

The admissibility hypotheses justify the entropy chain rule and cutoff integrations by parts. Since the transport field $(v,0)$ is divergence free, they give
\begin{equation}\label{eq:entropy-chain-kin}
\frac{d}{dt}H(\rho_t)=-\int_{\R^{2d}}\tilde u\cdot\nabla_v\rho\,dx\,dv.
\end{equation}

Integrating in time and substituting into \eqref{eq:quad-decomp-kin}, we obtain
\begin{align*}
J(\rho,u)
&=\frac{\beta}{2\gamma}\int_0^h\!\!\int_{\R^{2d}}\|\tilde u\|^2\rho\,dx\,dv\,dt+ \frac{\gamma}{2\beta}\int_0^h\mathcal I(\rho)\,dt \\
&\quad+\int_{\R^{2d}}\rho_h\log\rho_h\,dx\,dv-\int_{\R^{2d}}\rho_0\log\rho_0\,dx\,dv+\int_{\R^{2d}}\phi_{\mathrm{ki}}^\lambda\rho_h\,dx\,dv \\
&=\widetilde J(\rho,\tilde u)-H(\rho_0)-\log Z_\lambda.
\end{align*}
This proves \eqref{eq:J-exact-identity-kin}.

\emph{Derivation of \textup{(3)} from \textup{(1)}.}
Under the multiplier and regularity hypotheses in \textup{(3)}, the first-order conditions for \eqref{eq:J-kin}-\eqref{eq:state-kin} introduce a Lagrange multiplier $\Phi$ for the state equation. After integration by parts, the adjoint equation is
\[
-\partial_t\Phi=D_0\Phi-u\cdot\nabla_v\Phi-\frac{\beta}{2\gamma}\langle u,u\rangle,\qquad \Phi(h,\cdot)=-\phi_{\mathrm{ki}}^\lambda.
\]
The first-order condition with respect to $u$ gives the feedback law $u=-\gamma\beta^{-1}\nabla_v\Phi$. Substituting this into the adjoint equation gives the second equation in \eqref{eq:coupled-system-kin}.
\end{proof}

The next proposition shows, via a Hopf-Cole transformation, that the kinetic kernel formula represents the terminal density associated with a sufficiently smooth solution of the conditional optimality system \eqref{eq:coupled-system-kin}.
 
\begin{proposition}[Kernel representation of a classical optimality-system solution]
\label{prop:kernel-OC-kin}
Let $0<h\le h_0$ and assume that the system \eqref{eq:coupled-system-kin} admits a classical solution on $[0,h]\times\R^{2d}$ such that $\eta=e^{\Phi/2}$ and $\zeta=\rho/\eta$ belong to uniqueness classes for the semigroup equations used below, with $\psi$ and $\rho_0/(G_h\psi)$ in the corresponding semigroup domains. Then the terminal density is given by
\[
\rho_h=\mathcal K_h\rho_0,
\]
where $\mathcal K_h$ is the kernel operator in Definition~\ref{def:kernel-operator-kin}.
\end{proposition}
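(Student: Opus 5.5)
The plan is to linearize the optimality system \eqref{eq:coupled-system-kin} with the Hopf-Cole factorization $\rho=\eta\zeta$, $\eta=e^{\Phi/2}$. The nonlinear Hamilton-Jacobi equation should become a backward equation for $\eta$ driven by $D_0$, and the forward equation a forward equation for $\zeta$ driven by $D_0^*$. Both are then solved explicitly by the reference Gaussian semigroup $G_h$ in \eqref{eq:G0-kin}, and evaluating the product at $t=h$ should give \eqref{eq:kernel-kin}.

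\emph{Step 1 (backward equation for $\eta$).} With $\Phi=2\log\eta$ we have
\[
\nabla_v\Phi=\frac{2\nabla_v\eta}{\eta},\qquad \Delta_v\Phi=\frac{2\Delta_v\eta}{\eta}-\frac{2\|\nabla_v\eta\|^2}{\eta^2}.
\]
Hence
\[
D_0\Phi=\frac{2D_0\eta}{\eta}-\frac{2\gamma}{\beta}\frac{\|\nabla_v\eta\|^2}{\eta^2},\qquad \frac{\gamma}{2\beta}\|\nabla_v\Phi\|^2=\frac{2\gamma}{\beta}\frac{\|\nabla_v\eta\|^2}{\eta^2}.
\]
The quadratic terms cancel. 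Since $-\partial_t\Phi=-2\partial_t\eta/\eta$, the second line of \eqref{eq:coupled-system-kin} reduces to $-\partial_t\eta=D_0\eta$ with terminal value $\eta(h)=e^{-\phi_{\mathrm{ki}}^\lambda/2}=\psi$. Because $D_0$ generates $(G_h)$ and $\eta$ lies in a uniqueness class, we get $\eta(t)=G_{h-t}\psi$, and in particular $\eta(0)=G_h\psi$. Lemma~\ref{lem:weighted-ratio-bound-kin} guarantees that this is finite and positive for $h\le h_0$.

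\emph{Step 2 (forward equation for $\zeta$).} Using the feedback law $u=-\frac{\gamma}{\beta}\nabla_v\Phi=-\frac{2\gamma}{\beta}\nabla_v\eta/\eta$, we have $\rho u=-\frac{2\gamma}{\beta}\zeta\nabla_v\eta$. Expand both sides of $\partial_t(\eta\zeta)=D_0^*(\eta\zeta)-\frac{2\gamma}{\beta}\nabla_v\cdot(\zeta\nabla_v\eta)$ with the product rule.
\begin{itemize}
\item The cross terms $\pm\frac{2\gamma}{\beta}\nabla_v\eta\cdot\nabla_v\zeta$ cancel.
\item The term $\frac{\gamma}{\beta}\zeta\Delta_v\eta$ from $D_0^*$ combines with $-\frac{2\gamma}{\beta}\zeta\Delta_v\eta$ from the control term to give $-\frac{\gamma}{\beta}\zeta\Delta_v\eta$.
\item Together with $-\zeta v\cdot\nabla_x\eta$, this equals $-\zeta D_0\eta=\zeta\partial_t\eta$ by Step 1.
\end{itemize}
What remains is $\eta\,\partial_t\zeta=\eta\bigl(-v\cdot\nabla_x\zeta+\frac{\gamma}{\beta}\Delta_v\zeta\bigr)=\eta D_0^*\zeta$. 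Since $\eta>0$, this gives $\partial_t\zeta=D_0^*\zeta$ with $\zeta(0)=\rho_0/\eta(0)=\rho_0/(G_h\psi)$. This is the Kolmogorov forward equation of \eqref{eq:base-SDE-kin}, so by the uniqueness-class hypothesis
\[
\zeta(h,x,v)=\int_{\R^{2d}}G_h(x,v\mid y,w)\,\frac{\rho_0(y,w)}{(G_h\psi)(y,w)}\,dy\,dw.
\]

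\emph{Step 3 (conclusion).} Multiplying, $\rho_h=\eta(h)\zeta(h)=\psi\,\zeta(h)=\mathcal K_h\rho_0$. If the terminal condition is only fixed up to a constant, $\Phi(h)=-\phi_{\mathrm{ki}}^\lambda+C$, then $\eta$ is multiplied by $e^{C/2}$ and $\zeta$ by $e^{-C/2}$, so $\rho_h$ is unchanged.

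The algebra in Steps 1 and 2 is routine. The main obstacle is identifying the classical solutions $\eta$ and $\zeta$ with the semigroup formulas, given the unbounded coefficients and the hypoelliptic structure. I would not try to prove this here: the statement assumes that $\eta$ and $\zeta$ lie in uniqueness classes and that $\psi$ and $\rho_0/(G_h\psi)$ lie in the semigroup domains. I would invoke exactly that hypothesis, noting only that the explicit Gaussian kernel makes $G_{h-t}\psi$ and $G_t^*(\rho_0/G_h\psi)$ classical solutions of the respective equations.
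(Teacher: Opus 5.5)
Your proposal is correct and follows the paper's proof: the Hopf--Cole substitution $\eta=e^{\Phi/2}$ reduces the Hamilton--Jacobi equation to $\partial_t\eta=-D_0\eta$ with $\eta(h)=\psi$, so that $\eta(0)=G_h\psi$; the quotient $\zeta=\rho/\eta$ then solves $\partial_t\zeta=D_0^*\zeta$ with initial datum $\rho_0/(G_h\psi)$; multiplying the two at $t=h$ gives $\mathcal K_h\rho_0$. Your term-by-term cancellation spells out the ``direct computation'' that the paper leaves implicit, and your remark that an additive constant in $\Phi(h,\cdot)$ cancels between $\eta$ and $\zeta$ is a correct small addition.
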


\begin{proof}
We solve the coupled system \eqref{eq:coupled-system-kin} explicitly via the Hopf-Cole substitution $\eta:=e^{\Phi/2}$.
Computing $\partial_t\eta$ and $D_0\eta$ in terms of $\Phi$, and using that the second-order part of $D_0$ acts only in the $v$-variables, we obtain
\[
D_0\eta=\frac12\eta\,D_0\Phi+\frac{\gamma}{4\beta}\eta\langle \nabla_v\Phi,\nabla_v\Phi\rangle.
\]
Substituting $\eta$ into the second equation in \eqref{eq:coupled-system-kin}, it follows that
\[
\partial_t\eta=\frac12\eta\,\partial_t\Phi=-\frac12\eta\left(D_0\Phi+\frac{\gamma}{2\beta}\langle \nabla_v\Phi,\nabla_v\Phi\rangle\right)=-D_0\eta.
\]
Hence, $\eta$ satisfies the backward Kolmogorov equation
\[
\partial_t\eta=-D_0\eta,
\qquad
\eta(h,\cdot)=e^{-\phi_{\mathrm{ki}}^\lambda/2}=\psi.
\]
Since $G_h$ is the transition kernel of $D_0$, it implies
\begin{equation}\label{eq:eta-solution-kin}
\eta(t,\cdot)=G_{h-t}\psi,
\qquad
\eta(0,\cdot)=G_h\psi.
\end{equation}

Next, set $\zeta:=\rho/\eta$. Differentiating $\rho=\eta\zeta$ and substituting the first equation in \eqref{eq:coupled-system-kin} give
\[
\eta\,\partial_t\zeta=\partial_t\rho-\zeta\,\partial_t\eta=D_0^*(\eta\zeta)-\frac{\gamma}{\beta}\nabla_v\cdot(\nabla_v\Phi\,\eta\zeta)+\zeta\,D_0\eta.
\]
Since $\nabla_v\eta=\tfrac12\eta\nabla_v\Phi$, a direct computation using the product rule shows that the right-hand side simplifies to $\eta\,D_0^*\zeta$.
Therefore
\[
\partial_t\zeta=D_0^*\zeta,
\qquad
\zeta(0,\cdot)=\frac{\rho_0}{G_h\psi}.
\]
Hence
\[
\zeta(h,x,v)=\int_{\R^{2d}}G_h(x,v\mid y,w)\frac{\rho_0(y,w)}{G_h\psi(y,w)}\,dy\,dw.
\]
Combining this with $\rho_h=\eta(h,\cdot)\zeta(h,\cdot)=\psi\,\zeta(h,\cdot)$ gives
\[
\rho_h(x,v)=\psi(x,v)\int_{\R^{2d}}G_h(x,v\mid y,w)\frac{\rho_0(y,w)}{G_h\psi(y,w)}\,dy\,dw=(\mathcal K_h\rho_0)(x,v),
\]
as claimed.
\end{proof}

Proposition~\ref{prop:OC-kin} gives the exact relation between formulations \textup{(1)} and \textup{(2)} and records a conditional first-order optimality system, while Proposition~\ref{prop:kernel-OC-kin} shows that the kinetic kernel formula represents any classical solution of that system in the stated uniqueness classes.
Thus, conditional on the existence of such a minimizer, multiplier, and classical solution, the optimal control system and the forward-backward factorization provide a formal variational characterization of the one-step kernel operator \eqref{eq:kernel-kin}.

In particular, formulation \textup{(2)} in Proposition~\ref{prop:OC-kin} suggests a Fisher-information-regularized proximal-type step. Motivated by the dynamic formulation of Wasserstein transport \cite{benamou2000computational}, for each \(0<h\leq h_0\), we define the regularized kinetic action cost
\begin{align*}
\mathsf C_{\mathrm{ki},h}(\rho_k,\rho_h):=\inf_{\rho,\tilde u}\bigg\{&\int_0^h\!\int_{\R^{2d}} |\tilde u|^2\rho\,dx\,dv\,dt+\frac{\gamma^2}{\beta^2}\int_0^h \mathcal I(\rho_t)\,dt:\\
&\partial_t\rho=-v\cdot\nabla_x\rho+\nabla_v\cdot(\rho\tilde u), \quad \rho(0,\cdot)=\rho_k, \quad \rho(h,\cdot)=\rho_h \bigg\}.
\end{align*}
Here, the infimum is over pairs \((\rho,\tilde u)\) satisfying the displayed constraints for which \(u:=\tilde u-(\gamma/\beta)\nabla_v\log\rho\) makes \((\rho,u)\) classically admissible in the sense of Proposition~\ref{prop:OC-kin}; its value is \(+\infty\) if no such pair exists. The quantity \(\mathsf C_{\mathrm{ki},h}\) depends on \(h\) and is used only as a regularized action cost; no symmetry, triangle inequality, or metric property is asserted.
Thus, the one-step update with the kernel formula  \(\mathcal K_h\rho_k\)  in \eqref{eq:kernel-kin} is formally associated with a Fisher-information-regularized kinetic transport problem with terminal penalty $\KL(\rho_h\|\rho_\phi)$
\[
\inf_{\rho_h\in\mathcal P_W(\R^{2d})} \left\{\KL(\rho_h\|\rho_\phi)+\frac{\beta}{2\gamma}\mathsf C_{\mathrm{ki},h}(\rho_k,\rho_h)\right\}.
\]
The terminal relative-entropy term depends on the choice of $\phi_{\mathrm{ki}}^\lambda$ and should therefore be viewed as an auxiliary variational penalty associated with the construction of the kernel formula, rather than as the physical free energy relative to the Gibbs equilibrium.

\section{Generalizations to Degenerate and Nondegenerate Fokker-Planck Equations}
\label{sec:generalizations}

The construction of the kernel formula in Section~\ref{sec:kernel_kinetic} extends to affine Fokker-Planck equations with constant diffusion. We state a unified formulation that includes both degenerate phase-space equations and fully nondegenerate equations, thereby avoiding a repetition of the kernel, consistency, convergence, and variational arguments.

Let \(z=(x,v)\in\R^{d_x}\times\R^{d_v}\), set \(D:=d_x+d_v\), and allow \(d_x=0\), in which case \(z=v\in\R^{d_v}\). Let \(A_v\in\R^{d_v\times d_v}\) be constant, symmetric, and positive definite, let \(b(z)=Bz+c\), and assume that \(U(z)=A_v\nabla_v\phi(z)\) for a sufficiently regular potential \(\phi\) such that the normalization and weighted estimates stated below hold. We consider
\begin{equation}\label{eq:FP-deg-general}
\partial_t\rho=-\nabla_z\cdot(b\rho)+\nabla_v\cdot(A_v\nabla_v\rho)+\nabla_v\cdot(U\rho),\qquad z\in\R^D.
\end{equation}
Define
\begin{equation}\label{eq:D0-deg-general}
D_0f:=b\cdot\nabla_zf+\nabla_v\cdot(A_v\nabla_vf),\qquad D_0^*\rho:=-\nabla_z\cdot(b\rho)+\nabla_v\cdot(A_v\nabla_v\rho),
\end{equation}
and
\begin{equation}\label{eq:L-deg-general}
\mathcal Lf:=D_0f-U\cdot\nabla_vf,\qquad \mathcal L^*\rho:=D_0^*\rho+\nabla_v\cdot(U\rho).
\end{equation}
Thus, \eqref{eq:FP-deg-general} is \(\partial_t\rho=\mathcal L^*\rho\).

To describe the reference semigroup, set  
\begin{equation}\label{eq:affine-covariance-general}
\mathcal A:=\operatorname{diag}(0,A_v),\qquad m_h:=\int_0^h e^{(h-s)B}c\,ds,\qquad Q_h:=2\int_0^h e^{(h-s)B}\mathcal A e^{(h-s)B^\top}\,ds.
\end{equation}
The zero block is omitted when \(d_x=0\). Assume that \(Q_h\) is positive definite for \(h>0\), equivalently that the Kalman rank condition
\begin{equation}\label{eq:kalman-rank-general}
\operatorname{rank}\!\left[\mathcal A^{1/2},B\mathcal A^{1/2},\ldots,B^{D-1}\mathcal A^{1/2}\right]=D
\end{equation}
holds. Then \(D_0\) generates the Gaussian Markov semigroup with transition density
\begin{equation}\label{eq:kernel-D0-affine-general}
G_h(z\mid y)=\frac{1}{(2\pi)^{D/2}(\det Q_h)^{1/2}}\exp\!\left(-\frac12\left\|z-e^{hB}y-m_h\right\|_{Q_h^{-1}}^2\right),\quad
\|q\|_{Q_h^{-1}}^2:=q^\top Q_h^{-1}q.
\end{equation}
We use the convention \((G_hg)(y):=\int_{\R^D}G_h(z\mid y)g(z)\,dz.\) Set \(\psi:=e^{-\phi/2}\), and use the analogues on \(\R^D\) of the weighted spaces and the weights \(W\), \(W_m^+\), and \(W_{+,2m}\) from Section~\ref{sec:kernel_kinetic}.  
We assume that there exist \(h_0>0\), \(C>0\), and an integer \(m\ge0\) such that, for every \(0<h\le h_0\),
\begin{equation}\label{eq:affine-weighted-assumptions}
\begin{aligned}
&0<(G_h\psi)(y)<\infty\quad\text{for every }y\in\R^D,\qquad \left\|\frac{G_h\psi}{\psi}\right\|_{L_W^\infty}+\left\|\frac{\psi}{G_h\psi}\right\|_{L_W^\infty}\le C,\\
&\left\|\frac{G_h(\psi f)}{G_h\psi}-f-h\mathcal Lf\right\|_{L_{W_{+,2m}}^\infty}\le Ch^2\|f\|_{C_W^4},\qquad f\in C_W^4(\R^D). 
\end{aligned}
\end{equation}
The second line is the abstract normalized short-time estimate proved in the kinetic setting by combining the weighted Taylor estimates with the corresponding coefficient-growth bounds. Under them, define
\begin{equation}\label{eq:kernel-nd-general}
(\mathcal K_h^{\mathrm{aff}}\rho)(z):=\psi(z)\int_{\R^D}G_h(z\mid y)\frac{\rho(y)}{(G_h\psi)(y)}\,dy.
\end{equation}
For every nonnegative \(\rho\in L^1(\R^D)\), this operator is nonnegative and preserves mass. Moreover, for every bounded measurable \(f\),
\begin{equation}\label{eq:dual-kernel-affine-general}
\int_{\R^D}f\,\mathcal K_h^{\mathrm{aff}}\rho\,dz=\int_{\R^D}\frac{G_h(\psi f)}{G_h\psi}\rho\,dz.
\end{equation}
For unbounded \(f\), the same identity holds by truncation whenever \(G_h(\psi|f|)/(G_h\psi)\in L^1(\rho)\). In the consistency statement below, the weighted hypotheses are understood to ensure this condition and the absolute integrability of \((\mathcal Lf)\rho\).

\begin{corollary}[Unified consistency and conditional convergence]
\label{cor:kernel-deg-general}
Under \eqref{eq:affine-weighted-assumptions}, for every \(T>0\), there exist \(h_T\in(0,\min\{h_0,T\}]\) and \(C_T>0\) such that, for every \(f\in C_W^4(\R^D)\), every probability density \(\rho\) satisfying \(m_{W_{+,2m}}(\rho)<\infty\), and every \(0<h\le h_T\),
\begin{equation}\label{eq:weak-error-affine-general}
\left|\int_{\R^D}\left[f\,\mathcal K_h^{\mathrm{aff}}\rho-f\rho-h(\mathcal Lf)\rho\right]dz\right|\le C_Th^2\|f\|_{C_W^4}m_{W_{+,2m}}(\rho).
\end{equation}
Let \(\rho(t)\) solve \(\partial_t\rho=\mathcal L^*\rho\) with \(\rho(0)=\rho_0\), and define \(\rho_0^h=\rho_0\) and \(\rho_{k+1}^h=\mathcal K_h^{\mathrm{aff}}\rho_k^h\). If the backward equation \(\partial_tu=\mathcal Lu\), \(u(0)=f\), has a classical solution satisfying the \(C_W^4\) and \(\partial_{tt}\) bounds in \eqref{eq:backward regularity-global-kin}, the forward-backward duality in \eqref{eq:backward-forward-duality-kin} holds, and the exact and discrete solutions satisfy the uniform \(W_{+,2m}\)-moment bound in \eqref{eq:weighted-moment-bound-global-kin}, with \(\R^{2d}\) replaced by \(\R^D\), then, possibly after decreasing \(h_T\), for every \(0<h\le h_T\) and every integer \(n\ge0\) satisfying \(nh\le T\),
\begin{equation}\label{eq:global-weak-error-affine-general}
\left|\int_{\R^D}f\rho_n^h\,dz-\int_{\R^D}f\rho(nh)\,dz\right|\le C_Th\|f\|_{C_W^4}.
\end{equation}
Thus, the exact-normalization scheme has a one-step weak consistency residual of order \(O(h^2)\) and, conditionally, first-order finite-time weak accuracy.
\end{corollary}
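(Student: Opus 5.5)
The corollary is the affine analogue of Proposition~\ref{prop:kernel-kin} combined with Theorem~\ref{thm:global-weak-kin}. Since the kinetic-specific estimates (Lemmas~\ref{lem:weighted-ratio-bound-kin}, \ref{lem:weighted-Taylor-psif-kin} and the second half of Lemma~\ref{lem:Doob-h-exact-kin}) are exactly what \eqref{eq:affine-weighted-assumptions} postulates, I will rerun those two proofs with \(\R^{2d}\) replaced by \(\R^D\). The only structural fact they used about \(G_h\) was that it is a nonnegative Markov kernel, and this holds for the Gaussian density \eqref{eq:kernel-D0-affine-general} under the Kalman condition \eqref{eq:kalman-rank-general}.

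\textbf{Step 1: one-step consistency.} First I check mass preservation and the dual formula \eqref{eq:dual-kernel-affine-general} by Tonelli, exactly as after Definition~\ref{def:kernel-operator-kin}. This uses \(\int G_h(z\mid y)\psi(z)\,dz=(G_h\psi)(y)\in(0,\infty)\), which comes from the first line of \eqref{eq:affine-weighted-assumptions}. For \(f\in C_W^4\), I then need \(G_h(\psi|f|)/G_h\psi\in L^1(\rho)\) to extend the dual formula by truncation. I would get this from \(|f|\le \|f\|_{C_W^4}W\), \(W\) bounded by \(W_{+,2m}\), and the second line of \eqref{eq:affine-weighted-assumptions} applied to \(f\). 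Together these give \(|G_h(\psi f)/G_h\psi|\le |f|+h|\mathcal Lf|+Ch^2\|f\|W_{+,2m}\).

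This is the step I expect to be the main obstacle. Controlling \(|\mathcal Lf|\) by \(W_{+,2m}\) requires polynomial growth of the coefficients \(b\) and \(U=A_v\nabla_v\phi\). The affine \(b\) is fine, but for \(U\) the statement only says the hypotheses are ``understood to ensure'' the needed integrability. I would therefore invoke that standing convention explicitly rather than reprove it. With the dual formula in hand, I write
\[
\int f\,\mathcal K_h^{\mathrm{aff}}\rho-\int f\rho-h\int(\mathcal Lf)\rho=\int R\,\rho,
\]
where \(R\) is the residual in the second line of \eqref{eq:affine-weighted-assumptions}. Bounding \(|R|\le Ch^2\|f\|_{C_W^4}W_{+,2m}\) yields \eqref{eq:weak-error-affine-general}, with \(h_T:=\min\{h_0,T\}\).

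\textbf{Step 2: conditional convergence.} I will repeat the telescoping argument of Theorem~\ref{thm:global-weak-kin} verbatim. Set \(u_k=u((n-k)h)\) and \(E_k=\int u_k\rho_k^h\). Duality gives \(\int f\rho(nh)=E_0\), and \(E_n=\int f\rho_n^h\). I split each increment \(E_{k+1}-E_k=A_k+B_k\) as before.

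For \(A_k\), Step 1 with test function \(u_{k+1}\) gives \(|A_k|\le C_Th^2\|u_{k+1}\|_{C_W^4}m_{W_{+,2m}}(\rho_k^h)\le C_Th^2\|f\|_{C_W^4}\), using the assumed regularity and moment bounds. For \(B_k\), Taylor's theorem in time together with the \(\partial_{tt}u\) bound gives \(|B_k|\le C_Th^2\|f\|_{C_W^4}\). Summing over at most \(T/h\) steps gives \eqref{eq:global-weak-error-affine-general}. If needed, \(h_T\) is decreased so that it is at most the \(h_T\) appearing in the moment hypothesis.
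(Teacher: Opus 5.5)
Your proposal is correct and follows the paper's own argument. For the local estimate, it integrates the normalized short-time bound in \eqref{eq:affine-weighted-assumptions} against \(\rho\) via the dual formula \eqref{eq:dual-kernel-affine-general}, and for the global estimate it repeats the telescoping argument of Theorem~\ref{thm:global-weak-kin} verbatim.

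One small slip: a pointwise bound on \(|G_h(\psi f)|/G_h\psi\) does not give \(G_h(\psi|f|)/G_h\psi\in L^1(\rho)\). The absolute value sits inside \(G_h\), and \(|f|\) is generally not in \(C_W^4\). That truncation condition, like the integrability of \((\mathcal Lf)\rho\), must therefore come from the standing convention stated just before the corollary, which is exactly what the paper relies on.
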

\begin{proof}
    The local estimate follows from \eqref{eq:dual-kernel-affine-general} and the second bound in \eqref{eq:affine-weighted-assumptions} by integration against \(\rho\), while the global estimate follows under the stated hypotheses by the telescoping argument in the proof of Theorem~\ref{thm:global-weak-kin}, with \(\R^{2d}\) replaced by \(\R^D\).
\end{proof}
Next, we consider the variational interpretation of the kernel formula. Assume additionally that
\begin{equation}\label{eq:Zphi-affine-general}
Z_\phi:=\int_{\R^D}e^{-\phi(z)}\,dz<\infty,\qquad \rho_\phi:=Z_\phi^{-1}e^{-\phi}.
\end{equation}
For \(0<h\le h_0\), consider the problem over the analogues of the classically admissible pairs in Proposition~\ref{prop:OC-kin}, with \(\gamma\beta^{-1}I_d\) replaced by \(A_v\) and the kinetic transport replaced by the affine drift \(b\):
\begin{equation}\label{eq:J-affine-general}
\begin{aligned}
\inf_{\rho,u}\quad &J_h(\rho,u):=\frac12\int_0^h\!\int_{\R^D}\langle A_v^{-1}u,u\rangle\rho\,dz\,dt+\int_{\R^D}\phi\rho_h\,dz,\\
\text{subject to}\quad &\partial_t\rho=D_0^*\rho+\nabla_v\cdot(\rho u),\qquad \rho|_{t=0}=\rho_0.
\end{aligned}
\end{equation}
With \(\widetilde u:=u+A_v\nabla_v\log\rho\), the constraint becomes \(\partial_t\rho=-\nabla_z\cdot(b\rho)+\nabla_v\cdot(\rho\widetilde u)\). Define
\begin{equation}\label{eq:entropy-fisher-affine-general}
H(\rho):=\int_{\R^D}\rho\log\rho\,dz,\qquad \mathcal I_{A_v}(\rho):=\int_{\R^D}\langle A_v\nabla_v\log\rho,\nabla_v\log\rho\rangle\rho\,dz.
\end{equation}
For every such admissible pair, the exact entropy identity is
\begin{equation}\label{eq:J-entropy-affine-general}
\begin{aligned}
\widetilde J_h(\rho,\widetilde u):={}&\frac12\int_0^h\!\int_{\R^D}\langle A_v^{-1}\widetilde u,\widetilde u\rangle\rho\,dz\,dt+\frac12\int_0^h\mathcal I_{A_v}(\rho_t)\,dt+\KL(\rho_h\|\rho_\phi),\\
J_h(\rho,u)={}&\widetilde J_h(\rho,\widetilde u)-H(\rho_0)-\log Z_\phi+h\tr(B).
\end{aligned}
\end{equation}
Indeed, \(\nabla_z\cdot b=\tr(B)\) gives
\begin{equation}\label{eq:entropy-chain-affine-general}
\frac{d}{dt}H(\rho_t)=-\tr(B)-\int_{\R^D}\widetilde u\cdot\nabla_v\rho\,dz.
\end{equation}
Thus, the term \(h\tr(B)\) is independent of \((\rho,u)\), and no trace-free assumption is needed. If a classically admissible local minimizer exists, admits a sufficiently regular Lagrange multiplier for which the first variations and integrations by parts are valid, and the transformed Kolmogorov equations are unique in the relevant classes, with \(\psi\) and \(\rho_0/(G_h\psi)\) in their corresponding semigroup domains, then the same first-order optimality system and Hopf-Cole factorization as in Propositions~\ref{prop:OC-kin} and~\ref{prop:kernel-OC-kin} give
\begin{equation}\label{eq:variational-kernel-affine-general}
\rho^{\mathrm{opt}}(h,\cdot)=\mathcal K_h^{\mathrm{aff}}\rho_0.
\end{equation}
This is a conditional representation of a classical optimality-system solution, not an existence or uniqueness theorem for the variational problem.

When \(d_x>0\), the framework above gives the general degenerate phase-space equation, provided the Kalman rank condition transfers the \(v\)-diffusion to all components. The kinetic equation in Section~\ref{sec:kernel_kinetic} is recovered by taking \(d_x=d_v=d\), \(B=\left(\begin{smallmatrix}0&I_d\\0&0\end{smallmatrix}\right)\), \(c=0\), \(A_v=\gamma\beta^{-1}I_d\), and \(\phi=\phi_{\mathrm{ki}}^\lambda\).

When \(d_x=0\), rename \(v\) as \(x\) and \(A_v\) as \(A\). Then \eqref{eq:FP-deg-general} becomes the fully nondegenerate equation
\begin{equation}\label{eq:FP-nd-general}
\partial_t\rho=\nabla\cdot(A\nabla\rho)-\nabla\cdot\bigl((Bx+c)\rho\bigr)+\nabla\cdot(A\nabla\phi\,\rho),\qquad x\in\R^d.
\end{equation}
Here \(Q_h\) in \eqref{eq:affine-covariance-general} is automatically positive definite, and \eqref{eq:kernel-D0-affine-general} and \eqref{eq:kernel-nd-general} give the explicit nondegenerate Gaussian kernel operator. Corollary~\ref{cor:kernel-deg-general} and the variational identity \eqref{eq:J-entropy-affine-general} therefore apply to both specializations without separate proofs.

\section{Numerical experiments}
\label{sec:ne}
This section presents numerical experiments illustrating the accuracy of the kernel operators developed in Sections~\ref{sec:kernel_kinetic} and~\ref{sec:generalizations}, as well as their application in deterministic kinetic sampling.
 
Throughout, we choose a time step \(h>0\) for which the normalization factors used in the relevant kernel formula are finite; the analysis above guarantees this for sufficiently small \(h\) under its assumptions. In the density-evolution benchmarks, \(\rho_k\approx\rho(kh,\cdot)\) is evaluated on a fixed grid, while in the sampling experiments it is represented by a particle ensemble $\{(x_j^k,v_j^k)\}_{j=1}^N$.

\subsection{Numerical implementation}
\label{subsec:implementation}

The kernel operators in \eqref{eq:kernel-kin} and \eqref{eq:kernel-nd-general} involve normalization integrals that become expensive to evaluate by tensor-product quadrature in high dimensions. For a weighted empirical approximation
\[
\rho^N=\sum_{j=1}^N\omega_j\delta_{z_j},\qquad \omega_j\geq 0,\qquad \sum_{j=1}^N\omega_j=1,
\]
where \(z_j\in\R^m\), the kernel update is
\begin{equation}
(\mathcal K_h\rho^N)(z)=\psi(z)\sum_{j=1}^N\omega_j\frac{G_h(z\mid z_j)}{Z_h(z_j)},\qquad Z_h:=G_h\psi.
\label{eq:kernel-particle-discrete}
\end{equation}
Thus, the principal computational task is the evaluation of the positive normalizers \(Z_h(z_j)\).

We illustrate one possible approximation for the nondegenerate kernel in \eqref{eq:kernel-nd-general}; the same principle applies to the other Gaussian kernels considered here. In this case,
\begin{equation}
\label{eq:denominator-laplace-form}
Z_h(x)=\frac{1}{C_h}\int_{\R^d}\exp\bigl(-\Phi_h(y;x)\bigr)\,dy,\qquad \Phi_h(y;x)=\frac12\bigl\|y-e^{hB}x-m_h\bigr\|_{Q_h^{-1}}^2+\frac12\phi(y),
\end{equation}
with \( C_h=(2\pi)^{d/2}(\det Q_h)^{1/2}\).
Suppose that \(\Phi_h(\cdot;x)\) has a unique nondegenerate global minimizer
\[
y_h^*(x)=\operatorname*{argmin}_{y\in\R^d}\left\{\frac12\bigl\|y-(e^{hB}x+m_h)\bigr\|_{Q_h^{-1}}^2+\frac12\phi(y)\right\}.
\]
This is a generalized proximal point of \(\phi/2\) in the metric induced by \(Q_h^{-1}\). The leading Laplace approximation is
\begin{equation}
\widehat Z_h^{\mathrm L}(x):=\frac{\exp\bigl(-\Phi_h(y_h^*(x);x)\bigr)}{(\det Q_h)^{1/2}\bigl(\det\nabla_y^2\Phi_h(y_h^*(x);x)\bigr)^{1/2}}.
\label{eq:laplace-approx-denominator}
\end{equation}
Equivalently,
\[
\log\widehat Z_h^{\mathrm L}(x)=-\Phi_h(y_h^*(x);x)-\frac12\log\det\nabla_y^2\Phi_h(y_h^*(x);x)-\frac12\log\det Q_h,
\]
which is preferable numerically when the kernel is sharply concentrated. If there are finitely many isolated nondegenerate global minimizers, the corresponding leading contributions are summed. Although the localization of the Gaussian kernel motivates this approximation for small \(h\), no uniform Laplace-error estimate is established here; see, for example, \cite{tibshirani2025laplace}.

More generally, replacing \(Z_h\) by any positive numerical approximation \(\widehat Z_h\) gives
\begin{align}
(\widehat{\mathcal K}_{h,\mathrm{raw}}\rho)(z)&:=\psi(z)\int_{\R^m}G_h(z\mid y)\frac{\rho(y)}{\widehat Z_h(y)}\,dy,\label{eq:approx-raw-operator}\\
M_h(\rho)&:=\int_{\R^m}(\widehat{\mathcal K}_{h,\mathrm{raw}}\rho)(z)\,dz=\int_{\R^m}\frac{Z_h(y)}{\widehat Z_h(y)}\rho(y)\,dy, \qquad
\widehat{\mathcal K}_h\rho:=\frac{\widehat{\mathcal K}_{h,\mathrm{raw}}\rho}{M_h(\rho)},\label{eq:approx-normalized-operator}
\end{align}
with \(0<M_h(\rho)<\infty\). The last normalization preserves positivity and unit mass but makes \(\widehat{\mathcal K}_h\) nonlinear in \(\rho\). Since it is independent of \(z\), it does not affect logarithmic score estimates.

Alternatively, on a grid with positive quadrature weights \(q_i\), using the same quadrature in the normalizers and the update gives
\[
Z_{h,j}^{Q}=\sum_iq_i\psi(z_i)G_h(z_i\mid z_j),\qquad \rho_i^+=\psi(z_i)\sum_jq_jG_h(z_i\mid z_j)\frac{\rho_j}{Z_{h,j}^{Q}},
\]
and hence
\[
\sum_iq_i\rho_i^+=\sum_jq_j\rho_j.
\]
Thus, this realization preserves mass exactly at the discrete level.

The convergence results above concern the exact operator \(\mathcal K_h\). Indeed, if
\[
\epsilon_h:=\sup_y\left|\frac{\widehat Z_h(y)}{Z_h(y)}-1\right|\leq\frac12,
\]
then, for every probability density \(\rho\) and bounded measurable \(f\),
\[
\left|\int_{\R^m}f\,\widehat{\mathcal K}_h\rho-\int_{\R^m}f\,\mathcal K_h\rho\right|\leq4\epsilon_h\|f\|_\infty.
\]
Thus, denominator approximation produces an \(O(\epsilon_h)\) one-step perturbation for bounded $f$. A finite-time estimate of order \(O(h+\epsilon_h/h)\) would additionally require weighted relative-error, moment, and stability estimates for the approximate iterates, which are not proved here. 

In the density-evolution experiments, the normalizers are evaluated using positive grid quadrature, after which the raw density update is formed and mass-normalized as in \eqref{eq:approx-raw-operator}-\eqref{eq:approx-normalized-operator}. In the sampling experiments, the normalizers are approximated by the Laplace procedure, but only the logarithmic score of the resulting kernel mixture is evaluated; the global mass-normalization factor is not computed because it is independent of the evaluation point and therefore cancels from the score.
\begin{algorithm}[t]
\caption{Mass-normalized time-discrete scheme with an approximate denominator}
\label{alg:kernel_time_stepping}
\begin{algorithmic}[1]
\STATE Initialize a density $\rho_0$. Choose $T>0$ and an admissible step size $h>0$, and set $n=\lfloor T/h \rfloor$.
\FOR{$k=1,\ldots,n$}
    \STATE Compute a numerical approximation $\widehat Z_h$ of $Z_h=G_h\psi$.
    \STATE Form the raw update \(\widetilde\rho_k(z)=\psi(z)\int_{\R^m}G_h(z\mid y)\frac{\rho_{k-1}(y)}{\widehat Z_h(y)}\,dy\).
    \STATE Normalize the mass by setting \(\rho_k=\frac{\widetilde\rho_k}{\int_{\R^m}\widetilde\rho_k(z)\,dz}.\)
\ENDFOR
\end{algorithmic}
\end{algorithm}
 
\subsection{Density evolution}
\label{subsec:density-evolution} 
We consider the kinetic Fokker-Planck equation~\eqref{eq:kinetic-FP} on phase space $x,v\in\R$, with \(V(x)=x^2/2\), \(\beta=1\), and \(\gamma=5\). The kernel weight uses \(\lambda=0.34\). The initial density is the nine-component Gaussian mixture
\begin{equation}
\label{eq:rho0-density-2dtoy}
\rho_0(x,v)=\sum_{\ell=1}^9p_\ell\,\mathcal N\bigl((x,v);m_\ell,\Sigma_\ell\bigr),
\end{equation}
with unequal weights and component covariances that are reported in the appendix.
Since \(V\) is quadratic, each Gaussian component remains Gaussian. The reference density and score are computed by evolving the component means and covariances using the exact Gaussian evolution formulas and then recombining the resulting mixture.

The exact kinetic equation and kernel formula are posed on \(\R^2\), whereas all numerical quadratures in this benchmark are restricted to the finite box \(x\in[-7.5,7.5]\), \(v\in[-7,7]\). On this box, the denominator and numerator are evaluated on the same phase-space grid using a hybrid midpoint-kernel discretization: a normalized convolution is used when the position Gaussian is resolved, shifted interpolation is used when it is under-resolved, and velocity is integrated by quadrature.  Consequently, this is a finite-box numerical surrogate. We use \(T=0.35\), step sizes \(h\in\{0.175,0.0875,0.05,0.025\}\), and grids \(N_x\times N_v\in\{64\times32,158\times40,480\times52,1600\times72\}\), and denote the resulting approximation by \(\widehat{\mathcal K}_h\). 

For the grid computation, the nodal masses in \eqref{eq:kernel-particle-discrete} are \(\omega_j=q_j\rho_{k,j}/\sum_{\ell=1}^Nq_\ell\rho_{k,\ell}\) with quadrature weights \(q_j\) and nodal density values \(\rho_{k,j}\). Differentiating this discrete kernel mixture analytically gives
\begin{equation}\label{eq:score-estimator-general}
\nabla_z\log(\mathcal K_h\rho_k^N)(z)=\nabla_z\log\psi(z)+\sum_{j=1}^Nw_j(z)\nabla_z\log G_h(z\mid z_j),
\end{equation}
where
\begin{equation}\label{eq:wj-kernel}
w_j(z):=\frac{\omega_jG_h(z\mid z_j)/Z_h(z_j)}{\sum_{\ell=1}^N\omega_\ell G_h(z\mid z_\ell)/Z_h(z_\ell)},\qquad \sum_{j=1}^Nw_j(z)=1.
\end{equation}
When a positive numerical denominator \(\widehat Z_h\) is used, we just replace all $Z_h$ above by  \(\widehat Z_h\). The reported score error is
\[
\frac{\|s_h-s_{\mathrm{ref}}\|_{L^2(\rho_{\mathrm{ref}})}}{\|s_{\mathrm{ref}}\|_{L^2(\rho_{\mathrm{ref}})}},\qquad s=\nabla_{(x,v)}\log\rho.
\]
The gradient $\nabla_{(x,v)}\log G_h$ is computed from the explicit form~\eqref{eq:G0-kin} as
\begin{equation}
\begin{cases}
\nabla_x\log G_h(x,v\mid x_j,v_j)=-\dfrac{6\beta}{\gamma h^3}\left(x-x_j-\dfrac h2(v+v_j)\right),\\[3pt]
\nabla_v\log G_h(x,v\mid x_j,v_j)=\dfrac{3\beta}{\gamma h^2}\left(x-x_j-\dfrac h2(v+v_j)\right)-\dfrac{\beta}{2\gamma h}(v-v_j).
\end{cases}
\end{equation}

\begin{figure}[t]
\centering
\includegraphics[width=0.225\textwidth]{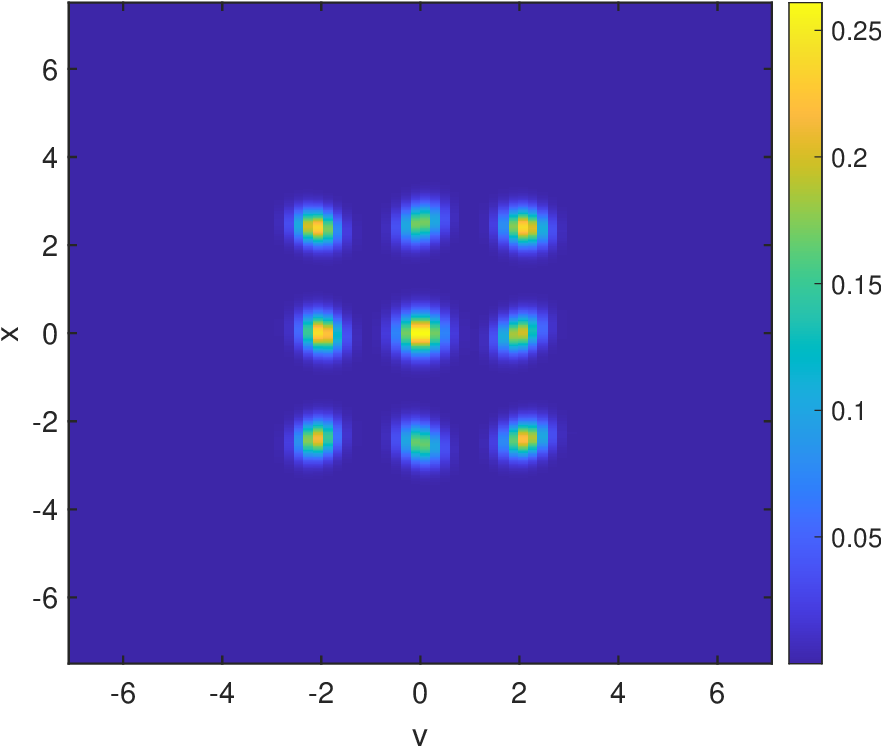}
\includegraphics[width=0.225\textwidth]{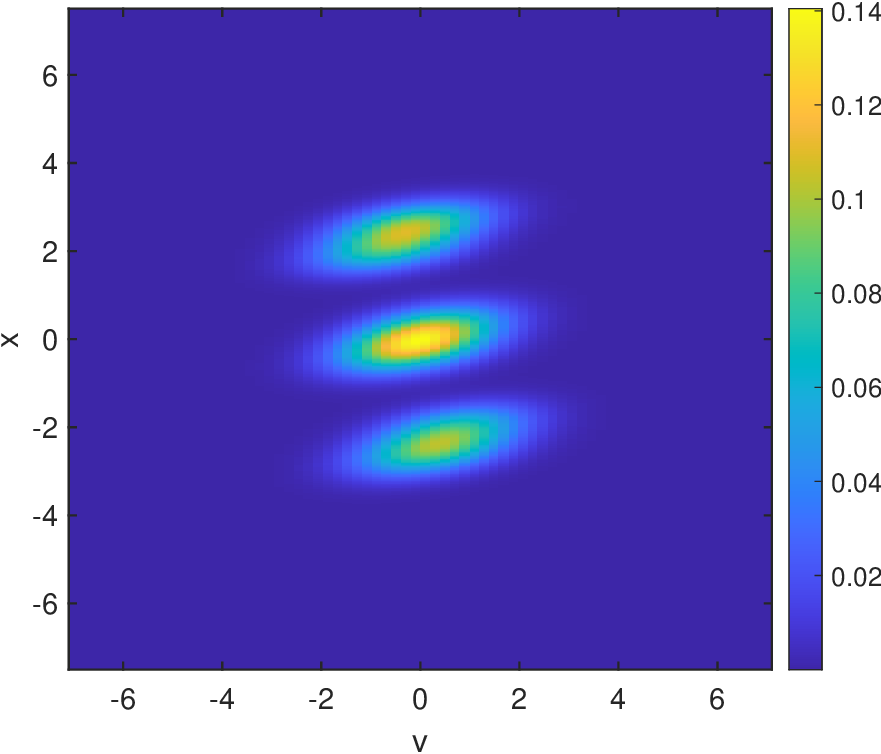}
\includegraphics[width=0.225\textwidth]{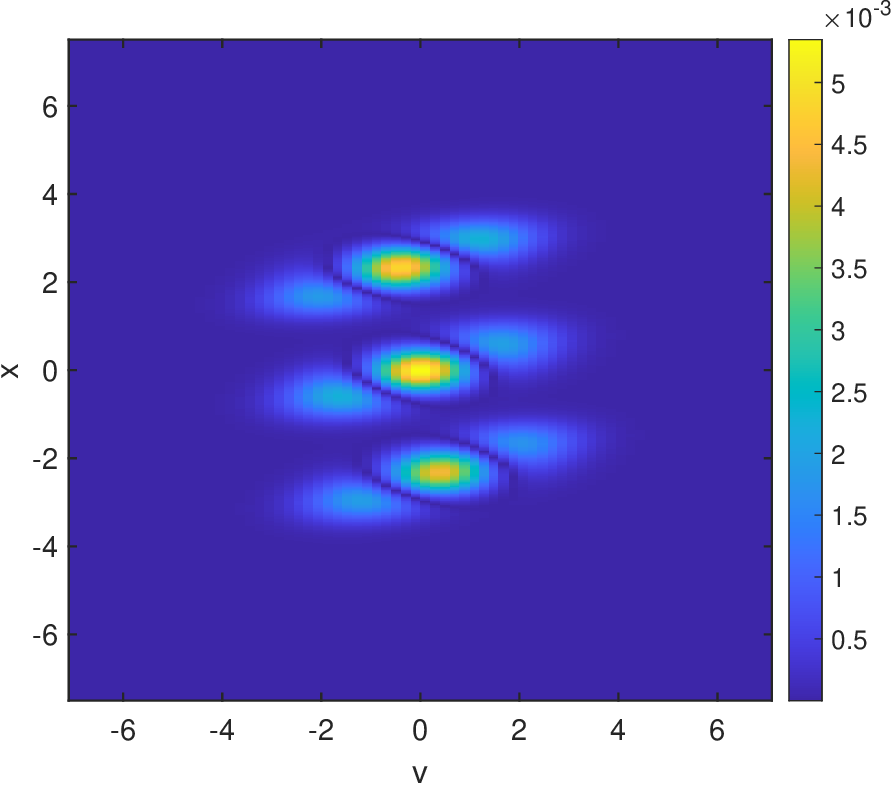}
\includegraphics[width=0.27\textwidth,trim=0cm 0cm 0cm 0.1cm,clip]{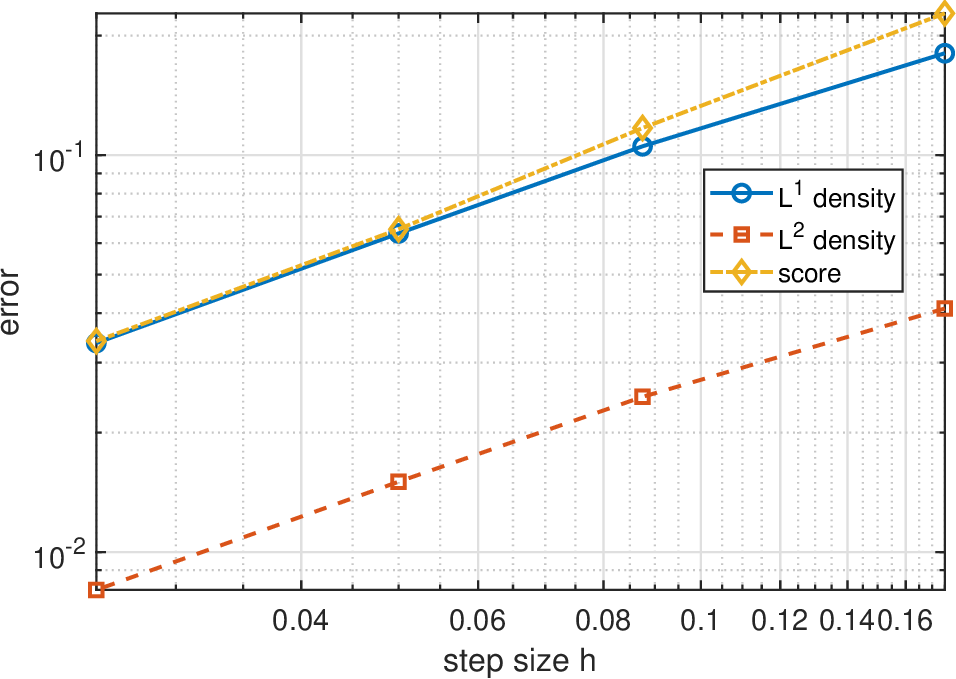}
\caption{Two-dimensional kinetic Fokker-Planck equation with Gaussian-mixture initial data. From left to right: initial density $\rho_0$; kernel approximation $\rho_n^h$ at \(T=0.35\) and \(h=0.025\); pointwise absolute error $|\rho_n^h-\rho_{\mathrm{ref}}|$; \(L^1\) density error (solid), \(L^2\) density error (dashed), and relative \(L^2(\rho_{\mathrm{ref}})\) score error (dash-dotted), as functions of \(h\) on a log-log scale.}
\label{fig_toy_2d}
\end{figure}

Figure~\ref{fig_toy_2d} shows that the kernel formula provides an accurate approximation of the density evolution and its score function $\nabla_{(x,v)}\log\rho$ in this test case.

\subsection{Kinetic sampling via the score estimator from the kernel formula}
\label{subsec:kinetic-sampling}

We now apply the kernel framework to the problem of sampling from a target distribution $\rho^*\propto e^{-\beta V}$ on $\R^d$ via the kinetic Fokker-Planck equation~\eqref{eq:kinetic-FP}.
The key step is to replace the unknown score $\nabla_v\log\rho(t,\cdot)$ in the probability flow ODE by the score function estimator derived above, thereby obtaining a fully deterministic particle update. The scheme presented in this section is analogous to that in \cite{tan2026accelerated}. However, since the main focus of the present work is the derivation and analysis of the kernel formula itself, a rigorous justification of the resulting sampling algorithm is left for future work.

We recall that the kinetic Fokker-Planck equation~\eqref{eq:kinetic-FP} is the Fokker-Planck equation for the underdamped Langevin dynamics
\[
dx_t = v_t\,dt, \qquad
dv_t = -\nabla_x V(x_t)\,dt - \gamma v_t\,dt +\sqrt{2\gamma\beta^{-1}}\,dW_t.
\]
Its associated probability flow ODE, which transports the same density $\rho(t,\cdot)$ deterministically, is
\begin{equation}\label{eq:KL-ODE}
dx_t = v_t\,dt, \qquad
dv_t = -\nabla_x V(x_t)\,dt - \gamma v_t\,dt -\gamma\beta^{-1}\nabla_v\log\rho(t,x_t,v_t)\,dt.
\end{equation}
The score $\nabla_v\log\rho$ is unknown and must be approximated from the particle ensemble at each step. 
We use the mass-normalized numerical kernel approximation to estimate the implicit score through $\nabla_v\log\widehat{\mathcal K}_h\rho_k\approx\nabla_v\log\rho_{k+1}$.

Using the score estimator in \eqref{eq:score-estimator-general}, and taking the velocity component of the score function gives
\begin{align}
\nabla_v\log\widehat{\mathcal K}_h{\rho}_{k}(x,v) =&-\frac{\beta}{2\gamma}\bigl(\nabla_xV(x)+\gamma v\bigr)+\frac{3\beta}{\gamma h^2}\sum_{j=1}^N\Bigl(x-x_j-\tfrac{h}{2}(v+v_j)\Bigr)\widehat w_j(x,v)\notag \\
&-\frac{\beta}{2\gamma h}\sum_{j=1}^N(v-v_j)\,\widehat w_j(x,v).
\label{eq:kinetic-score-estimator}
\end{align}

We therefore use Algorithm~\ref{alg:kinetic_sampling_scheme} as a semi-implicit discretization of \eqref{eq:KL-ODE}. The density used in the score is advanced by one step with the mass-normalized numerical kernel approximation, while the score is evaluated at the current particle location. Thus, the method is semi-implicit at the density level but explicit in the particle location and numerical implementation.

In Algorithm~\ref{alg:kinetic_sampling_scheme}, the update is deterministic, and the diffusive effect is represented through the score function term. The sampling experiments below are intended as empirical demonstrations of the numerically approximated score estimator. Some of the displayed nonconvex targets fall outside the bounded-Hessian assumptions used in Section~\ref{sec:kernel_kinetic}. A rigorous theoretical treatment of more general targets is left for future work. 

For the experiments below, we use a modified kernel score for better numerical stability. First, define the coercive weight
\[
\phi_{\mathrm c}^{\lambda}(x,v):=\lambda V(x)+\frac{\beta}{2}\left\lvert v+\gamma^{-1}\nabla_xV(x)\right\rvert^2=\phi_{\mathrm{ki}}^\lambda(x,v)+\frac{\beta}{2\gamma^2}\lvert\nabla_xV(x)\rvert^2,
\]
where $\phi_{\mathrm{ki}}^\lambda$ is defined in \eqref{eq:phi-kin} and $\psi_{\mathrm c}(x,v):= \exp\!\left(-\frac12\phi_{\mathrm c}^{\lambda}(x,v)\right)$.  The added term is to ensure  \(\psi_{\mathrm c}\) is bounded and the corresponding Gaussian normalizers are finite.  

Second, since the original $h^3$ scaling in the Gaussian kernel \eqref{eq:G0-kin} creates numerical instability, we use
\begin{equation}\label{eq:mollified-kinetic-kernel}
G^{\mathrm{mol}}_h(x,v\mid y,w):=\frac{\left(\frac{\beta}{4\pi\gamma h}\right)^{d/2}}{(2\pi a_x)^{d/2}}\exp\!\left(-\frac{1}{2a_x}\left\|x-y-\frac{h}{2}(v+w)\right\|^2-\frac{\beta}{4\gamma h}\|v-w\|^2
\right),
\end{equation}
with \(a_x:=(\gamma h^3)/(6\beta)+\tau_x^2\), where $\tau_x$ is a small positive regularization constant. For fixed \(\tau_x>0\), the mollified kernel \(G^{\mathrm{mol}}_h\) does not have the same small-\(h\) scaling as \(G_h\). Accordingly, the mollified sampling scheme is used here as a numerical regularization and is not covered by the consistency or finite-time convergence results proved above. 
For the empirical measure \(\rho_k^N=\sum_{j=1}^N\omega_j\delta_{(x_j^k,v_j^k)}\), define the exact normalizers
\begin{equation}\label{eq:mollified-kinetic-normalizer}
Z^{\mathrm{mol}}_{h,j}:=\int_{\R^{2d}}G^{\mathrm{mol}}_h(x,v\mid x_j^k,v_j^k)\psi_{\mathrm c}(x,v)\,dx\,dv.
\end{equation}
Let \(\widehat{Z}^{\mathrm{mol}}_{h,j}>0\) denote numerical approximations obtained by applying the same Laplace procedure on \(\R^{2d}\), with the covariance and phase corresponding to \(G^{\mathrm{mol}}_h\) and \(\psi_{\mathrm c}\). The mass-normalized kernel operator \(\widehat{\mathcal K}^{\mathrm{mol}}_h\), acting on \(\rho_k^N\), is then defined as in \eqref{eq:approx-raw-operator}-\eqref{eq:approx-normalized-operator}, with \(G_h\), \(\psi\), and \(\widehat Z_h(z_j)\) replaced by \(G^{\mathrm{mol}}_h\), \(\psi_{\mathrm c}\), and \(\widehat{Z}^{\mathrm{mol}}_{h,j}\), respectively. Since the resulting mass-normalization factor is independent of \((x,v)\), it does not contribute to the logarithmic score.
 
The velocity score in the experiments is obtained by differentiating \(\log(\widehat{\mathcal K}^{\mathrm{mol}}_h\rho_k^N)\). Equivalently, in \eqref{eq:kinetic-score-estimator}, we replace \(\widehat w_j\) by the normalized weights in \eqref{eq:wj-kernel} formed using \(G^{\mathrm{mol}}_h\) and \(\widehat{Z}^{\mathrm{mol}}_{h,j}\), and replace the coefficient \(3\beta/(\gamma h^2)\) of the positional-residual term by \(h/(2a_x)\); all other terms and coefficients remain unchanged.  

\begin{algorithm}[t]
\caption{Mollified kernel-regularized kinetic sampling scheme}
\label{alg:kinetic_sampling_scheme}
\begin{algorithmic}[1]
\STATE Initialize \(\rho_0^N=N^{-1}\sum_{i=1}^N\delta_{(x_i^0,v_i^0)}\). Choose \(T,h>0\) and set \(n=\lfloor T/h\rfloor\).
\FOR{$k=0,\ldots,n-1$}
    \STATE Compute approximations \(\widehat{Z}^{\mathrm{mol}}_{h,j}\) as in \eqref{eq:laplace-approx-denominator} for \(j=1,\ldots,N\).
    \STATE For each \(i\), evaluate \(s_{i,k}:=\nabla_v\log(\widehat{\mathcal K}^{\mathrm{mol}}_h\rho_k^N)(x_i^k,v_i^k)\) using \eqref{eq:kinetic-score-estimator} with the substitutions specified above.
    \STATE Update \(v_i^{k+1}=v_i^k-h\nabla_xV(x_i^k)-h\gamma v_i^k-h\gamma\beta^{-1}s_{i,k}\) and \(x_i^{k+1}=x_i^k+h\,v_i^{k+1}\).
\ENDFOR
\end{algorithmic}
\end{algorithm}

In the following experiment, each algorithm starts from the same initial cloud,
\[
x_i^0\sim\mathcal N(0,0.45^2I_2),\qquad v_i^0\sim\mathcal N(0,0.70^2I_2).
\]
The underdamped Langevin dynamics are discretized by an Euler-Maruyama step. We pick \(\beta=\lambda=1\), \(N = 512\), \(h = 0.02\), \(T=3\), \(\gamma = 1.5\), and \(\tau_x = 0.16\).

In Fig.~\ref{fig:sampling_various}, we consider the Rosenbrock-type potential
\[
V(x)=0.9\bigl((1-x_1)^2+18(x_2-x_1^2)^2\bigr),
\]
and the six-peak target
\[
V(x)=-\log\sum_{m=1}^{6}\exp\left(-\frac{\lVert x-c_m\rVert^2}{2\sigma^2}\right),\quad c_m=R\bigl(\cos\theta_m,\sin\theta_m\bigr),\quad \theta_m=\frac{\pi(m-1)}{3},
\]
where \((R,\sigma)=(2.4,0.2)\). 
 
\begin{figure}[t]
\centering
\includegraphics[width=0.24\linewidth,trim=2.5cm 1.5cm 2.5cm 1.5cm,clip]{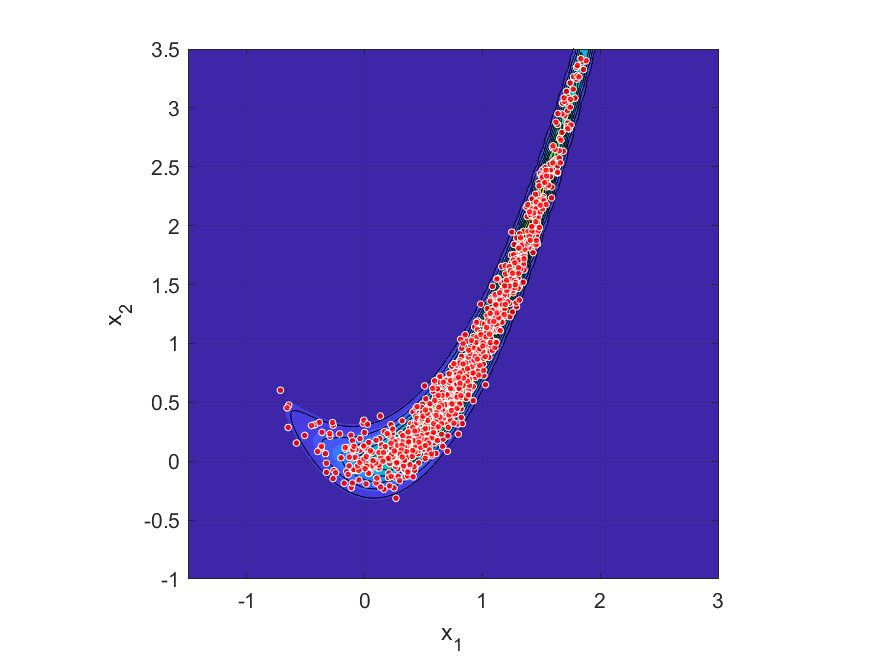}
\includegraphics[width=0.24\linewidth,trim=2.5cm 1.5cm 2.5cm 1.5cm,clip]{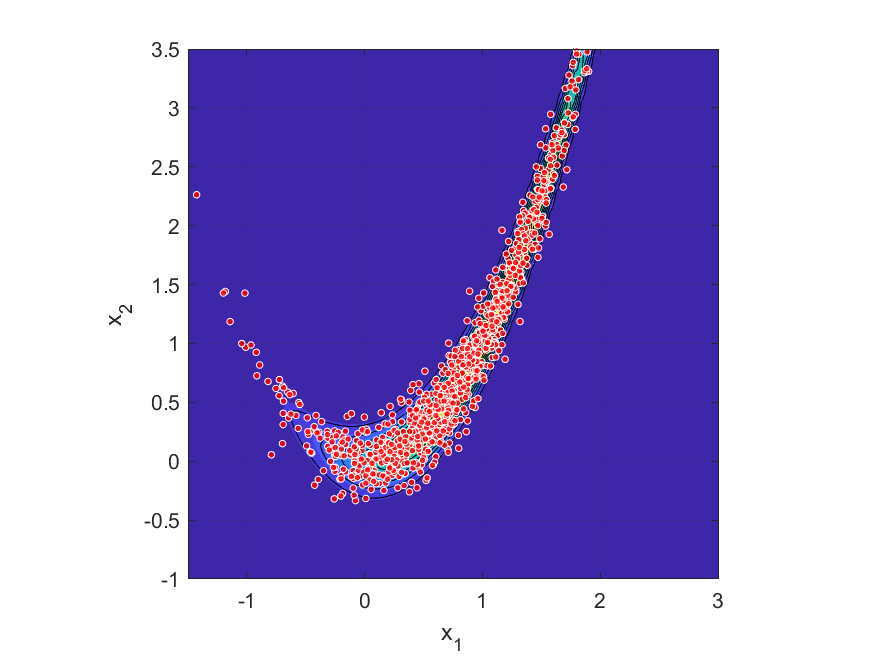}
\includegraphics[width=0.24\linewidth,trim=2.5cm 1.5cm 2.5cm 1.5cm,clip]{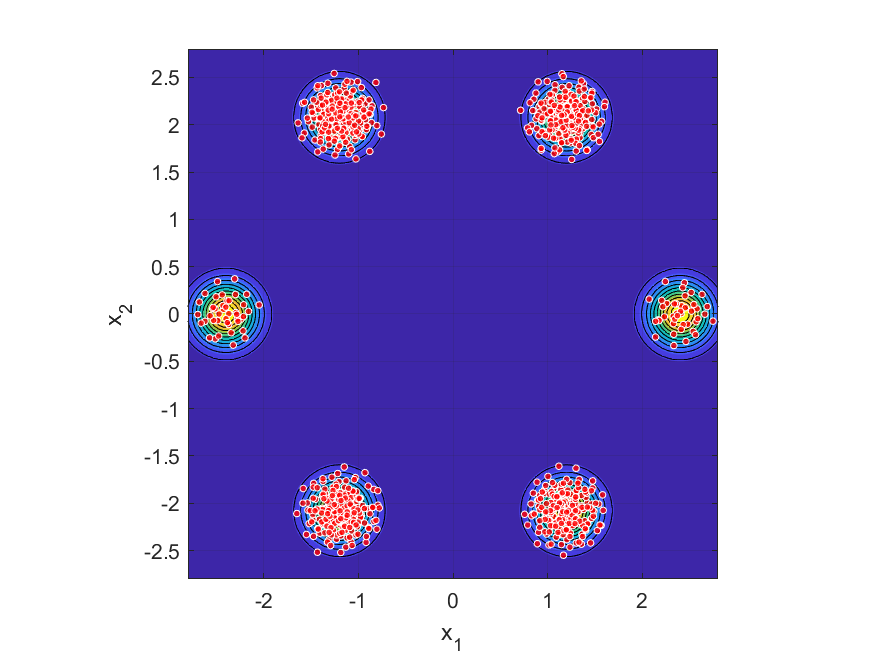}
\includegraphics[width=0.24\linewidth,trim=2.5cm 1.5cm 2.5cm 1.5cm,clip]{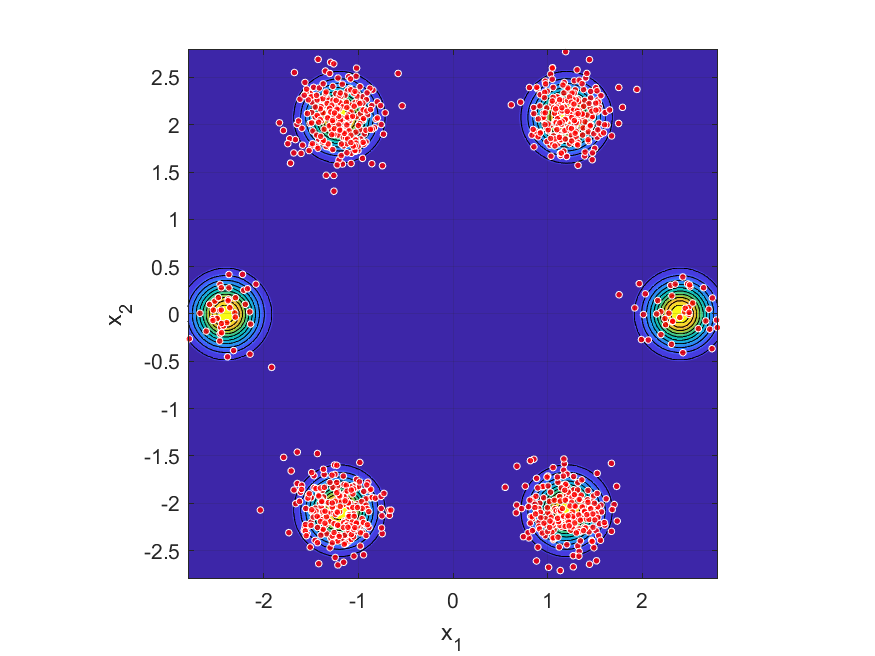}
\caption{Rosenbrock-type (left pair) and six-peak (right pair) targets.
Within each pair: particles from Algorithm~\ref{alg:kinetic_sampling_scheme} (left) and the underdamped Langevin dynamics (right).}
\label{fig:sampling_various}
\end{figure}
In Fig.~\ref{fig:sampling_various}, compared with the underdamped Langevin algorithm, the kernel scheme remains more concentrated near the Rosenbrock ridge and populates the six modes more evenly. These qualitative observations suggest that the ensemble-dependent kernel score may aid finite-time exploration, but they are not intended as a quantitative claim of superior sampling accuracy.

\section{Conclusion}
In this paper, we derived an explicit kernel formula for the kinetic Fokker-Planck equation. We established local weak consistency and, conditional on weighted backward regularity and uniform finite-time weighted-moment bounds for the exact solution and the discrete iterates, first-order finite-time weak convergence of the exact-normalization kernel scheme. We gave a formal variational characterization through a Fisher-information-regularized kinetic transport problem and recorded analogous constructions for broader classes of degenerate and nondegenerate Fokker-Planck equations. The numerical section described implementations using approximate denominators.  The numerical experiments indicate that the kernel approximation captures short-time density evolution in the tested examples and that a numerically approximated score can be used to construct deterministic particle schemes for finite-time kinetic sampling.

Beyond the present analysis, the explicit kernel formula update suggests two natural directions for future work. From the perspective of scientific computing, it may serve as a building block for efficient and structure-preserving approximation of more general parabolic and kinetic equations. From the perspective of machine learning, it may also motivate new network architectures that incorporate the interactions induced by the kernel formula and phase space evolution directly into the model design, with the longer-term goal of developing efficient kinetic generative models.

\begin{appendix}
\section{Proof of the weighted estimates in Section~\ref{sec:kernel_kinetic}}
\label{app:proof}

\begin{proof}[Proof of Lemma~\ref{lem:weighted-ratio-bound-kin}]
Let
\[
\sigma:=\sqrt{2\gamma\beta^{-1}}, \qquad \Delta_x:=hw+\sigma h^{3/2}\Bigl(\frac{1}{2\sqrt3}\xi+\frac12\eta\Bigr), \qquad \Delta_v:=\sigma h^{\frac12}\eta.
\]
The explicit Gaussian representation in \eqref{eq:G0-kin} can be rewritten as
\begin{equation}\label{eq:kinetic-gaussian-representation}
(G_h g)(y,w)=\frac{1}{(2\pi)^d}\int_{\R^d}\int_{\R^d}e^{-\frac12(|\xi|^2+|\eta|^2)}g(y+\Delta_x,w+\Delta_v)\,d\xi\,d\eta.
\end{equation}
Applying this with \(g=\psi\) gives
{\small\begin{equation}\label{eq:ratio-integral-kin}
\frac{G_h\psi(y,w)}{\psi(y,w)}=\frac{1}{(2\pi)^d}\int_{\R^d}\int_{\R^d}\exp\!\left(-\frac{|\xi|^2+|\eta|^2}{2}-\frac{\phi_{\mathrm{ki}}^\lambda(y+\Delta_x,w+\Delta_v)-\phi_{\mathrm{ki}}^\lambda(y,w)}{2}\right)\,d\xi\,d\eta.
\end{equation}}

Since \(\|D^2V\|_{L^\infty}<\infty\), the mean-value theorem yields
\[
|\nabla_xV(y)|\le C(1+|y|),
\]
and the second-order Taylor expansion implies
\[
|V(y+\zeta)-V(y)| \le C\bigl((1+|y|)|\zeta|+|\zeta|^2\bigr), \qquad |\nabla_xV(y+\zeta)-\nabla_xV(y)| \le C|\zeta|.
\]
Using the definition of \(\phi_{\mathrm{ki}}^\lambda\) in \eqref{eq:phi-kin}, we obtain
\begin{align*}
\phi_{\mathrm{ki}}^\lambda(y+\Delta_x,w+\Delta_v)-\phi_{\mathrm{ki}}^\lambda(y,w)
=&\ \lambda\bigl(V(y+\Delta_x)-V(y)\bigr)+\frac{\beta}{2}\bigl(|w+\Delta_v|^2-|w|^2\bigr) \\
&+\frac{\beta}{\gamma}\Bigl[(w+\Delta_v)\cdot\nabla_xV(y+\Delta_x)-w\cdot\nabla_xV(y)\Bigr].
\end{align*}
Since
\[
|\Delta_x|\le Ch(1+|w|)+Ch^{3/2}(|\xi|+|\eta|), \qquad |\Delta_v|\le Ch^{\frac12}|\eta|,
\]
it follows that
\begin{align}\label{eq:phi-increment-kin}
&\bigl|\phi_{\mathrm{ki}}^\lambda(y+\Delta_x,w+\Delta_v)-\phi_{\mathrm{ki}}^\lambda(y,w)\bigr|
\\ \le& Ch(1+|y|^2+|w|^2)+Ch^{\frac12}(1+|y|+|w|)|\eta|+Ch(|\xi|^2+|\eta|^2).\notag
\end{align}

Substituting \eqref{eq:phi-increment-kin} into \eqref{eq:ratio-integral-kin} gives
\begin{align*}
&\frac{G_h\psi(y,w)}{\psi(y,w)}
 \le Ce^{Ch(1+|y|^2+|w|^2)} \\
&\qquad\int_{\R^d}\int_{\R^d}\exp\!\left(-\frac{|\xi|^2+|\eta|^2}{2}+Ch^{\frac12}(1+|y|+|w|)|\eta|+Ch(|\xi|^2+|\eta|^2)\right)\,d\xi\,d\eta.
\end{align*}
For \(h\le h_0\) sufficiently small, the term \(Ch(|\xi|^2+|\eta|^2)\) is absorbed into the negative Gaussian. Completing the square in \(\eta\) then yields
\[
\frac{G_h\psi(y,w)}{\psi(y,w)}\le Ce^{Ch(1+|y|^2+|w|^2)}.
\]
Since $W(y,w)=e^{\theta(1+|y|^2+|w|^2)}$, choosing \(h_0>0\) so that \(Ch_0<\theta\) gives
\[
\sup_{0<h\le h_0}\left\|\frac{G_h\psi}{\psi}\right\|_{L_W^\infty}\le C.
\]

For the inverse ratio, fix \(\varepsilon>0\). By Young's inequality,
\[
Ch^{\frac12}(1+|y|+|w|)|\eta|\le \varepsilon(1+|y|^2+|w|^2)+C_\varepsilon h|\eta|^2.
\]
Hence \eqref{eq:phi-increment-kin} implies
\[
\phi_{\mathrm{ki}}^\lambda(y+\Delta_x,w+\Delta_v)-\phi_{\mathrm{ki}}^\lambda(y,w)\le (\varepsilon+Ch)(1+|y|^2+|w|^2)+C_\varepsilon h|\eta|^2+Ch(|\xi|^2+|\eta|^2).
\]
Restricting \eqref{eq:ratio-integral-kin} to the set \(\{|\xi|\le 1,\ |\eta|\le 1\}\), we get
\[
\phi_{\mathrm{ki}}^\lambda(y+\Delta_x,w+\Delta_v)-\phi_{\mathrm{ki}}^\lambda(y,w)\le (\varepsilon+Ch)(1+|y|^2+|w|^2)+C_\varepsilon h+Ch.
\]
Choosing \(h_0>0\) smaller if necessary so that \(Ch_0\le \varepsilon\), and enlarging \(C_\varepsilon\) if necessary, we obtain
\[
\phi_{\mathrm{ki}}^\lambda(y+\Delta_x,w+\Delta_v)-\phi_{\mathrm{ki}}^\lambda(y,w)\le 2\varepsilon(1+|y|^2+|w|^2)+C_\varepsilon h.
\]
Hence
\[
\frac{G_h\psi(y,w)}{\psi(y,w)}\ge c\,\exp\!\Bigl(-2\varepsilon(1+|y|^2+|w|^2)-C_\varepsilon h\Bigr),
\]
and therefore
\[
\frac{\psi(y,w)}{G_h\psi(y,w)}\le C\,\exp\!\Bigl(2\varepsilon(1+|y|^2+|w|^2)+C_\varepsilon h\Bigr).
\]
Choosing \(2\varepsilon<\theta\) and then \(h_0>0\) sufficiently small, we conclude that
\[
\sup_{0<h\le h_0}\left\|\frac{\psi}{G_h\psi}\right\|_{L_W^\infty}\le C.
\]
\end{proof}

\begin{proof}[Proof of Lemma~\ref{lem:weighted-Taylor-psif-kin}]
We divide the proof into two steps.

\medskip
\noindent\emph{Step 1: bound on \(D_0^2(\psi f)\).}
Since $D_0=v\cdot\nabla_x+\gamma\beta^{-1}\Delta_v$, the operator $D_0^2$ contains at most second derivatives in $x$, at most fourth derivatives in $v$, and mixed derivatives of total order at most $3$. Expanding $D_0^2(\psi f)$ by repeated use of the product rule yields
\[
\frac{D_0^2(\psi f)}{\psi}=\sum_{|\mu|\le 2,\ |\nu|\le 4,\ |\mu|+|\nu|\le 4} a_{\mu,\nu}(x,v)\,\partial_x^\mu\partial_v^\nu f(x,v),
\]
where each coefficient $a_{\mu,\nu}$ is a polynomial expression in $v$ and in derivatives of $V$ up to order $3$.
By Assumption~\ref{ass:kinetic-structure}, all such coefficients have at most polynomial growth. Since \(f\in C_W^4\), there exists an integer \(m\ge0\) such that
\begin{equation}\label{eq:D0sq-psif-bound-kin}
\left|\frac{D_0^2(\psi f)(x,v)}{\psi(x,v)}\right|\le C\,W_m(x,v)\|f\|_{C_W^4}, \qquad f\in C_W^4(\R^{2d}).
\end{equation}

\medskip
\noindent\emph{Step 2: semigroup propagation and Taylor expansion.}
Define
\[
\widehat\psi_m(x,v):=W_m(x,v)\psi(x,v).
\]
We claim that
\begin{equation}\label{eq:Gh-psim-bound-kin}
G_s\widehat\psi_m(x,v)\le C\,W_{m}^{+}(x,v)\psi(x,v), \qquad 0<s\le h_0.
\end{equation}
Using the Gaussian representation \eqref{eq:kinetic-gaussian-representation}, we have
\[
G_s\widehat\psi_m(x,v)=\frac{1}{(2\pi)^d}\int_{\R^d}\int_{\R^d} e^{-\frac12(|\xi|^2+|\eta|^2)}\widehat\psi_m(x+\Delta_x^s,v+\Delta_v^s)\,d\xi\,d\eta,
\]
where
\[
\Delta_x^s:=sv+\sigma s^{3/2}\Bigl(\frac{1}{2\sqrt3}\xi+\frac12\eta\Bigr),
\qquad
\Delta_v^s:=\sigma s^{1/2}\eta.
\]
Set \(Q(x,v):=1+|x|^2+|v|^2\) and \(r:=(|\xi|^2+|\eta|^2)^{1/2}\). The increment estimates used in the proof of Lemma~\ref{lem:weighted-ratio-bound-kin}, together with the definition of \(W_m\), give, uniformly for \(0<s\le h_0\),
\[
\frac{\widehat\psi_m(x+\Delta_x^s,v+\Delta_v^s)}{\widehat\psi_m(x,v)}\le C\bigl(1+s^{m/2}r^m\bigr)\exp\!\left(CsQ(x,v)+C\sqrt{sQ(x,v)}\,r+Csr^2\right).
\]
After decreasing \(h_0\) if necessary, Young's inequality absorbs the \(r\)-dependent exponential and polynomial factors into the Gaussian in \eqref{eq:kinetic-gaussian-representation}. Consequently,
\[
G_s\widehat\psi_m(x,v)\le C\widehat\psi_m(x,v)e^{CsQ(x,v)}.
\]
Decreasing \(h_0\) once more so that \(Ch_0<\bar\theta-\theta\), we have \(W_m e^{CsQ}\le W_m^+\) for \(0<s\le h_0\), which proves \eqref{eq:Gh-psim-bound-kin}.

Now set \(g:=\psi f\). After enlarging \(m\) if necessary, direct differentiation gives
\[
|g|+|D_0g|+|D_0^2g|\le C\widehat\psi_m\|f\|_{C_W^4}.
\]
We justify the semigroup Taylor identity by localization. Let \(\chi\in C_c^\infty(\R^{2d})\) satisfy \(0\le\chi\le1\) and \(\chi=1\) on the unit ball, and set \(\chi_R(z):=\chi(z/R)\) and \(g_R:=\chi_Rg\). Since \(g_R\) is compactly supported and has the derivatives required by \(D_0^2\), Dynkin's formula applied twice gives
\[
G_hg_R-g_R-hD_0g_R=\int_0^h(h-s)G_sD_0^2g_R\,ds.
\]
The product rule, the derivative bounds used in Step~1, and a further enlargement of \(m\) give, uniformly for \(R\ge1\),
\[
|g_R|+|D_0g_R|+|D_0^2g_R|\le C\widehat\psi_m\|f\|_{C_W^4}.
\]
Therefore \eqref{eq:Gh-psim-bound-kin} and dominated convergence allow \(R\to\infty\), yielding
\[
G_hg-g-hD_0g=\int_0^h(h-s)G_sD_0^2g\,ds.
\]
Finally, positivity of \(G_s\), \eqref{eq:D0sq-psif-bound-kin}, and \eqref{eq:Gh-psim-bound-kin} imply
\[
\left|\frac{G_sD_0^2(\psi f)(x,v)}{\psi(x,v)}\right|\le\frac{G_s|D_0^2(\psi f)|(x,v)}{\psi(x,v)}\le C\|f\|_{C_W^4}\frac{G_s\widehat\psi_m(x,v)}{\psi(x,v)}\le C W_m^+(x,v)\|f\|_{C_W^4}.
\]
Dividing by \(W_{m}^{+}(x,v)\), taking the supremum, and integrating in \(s\) yields \eqref{eq:weighted-Taylor-psif-kin}. The particular case \eqref{eq:weighted-Taylor-psi-kin} follows by taking \(f\equiv 1\).
\end{proof}
\section{Numerical details.} Details for \(\rho_0\) in (61).
The mixture weights are
\[
(p_1,\ldots,p_9)=(0.10,0.09,0.11,0.12,0.16,0.10,0.11,0.09,0.12).
\]
The component means, ordered as \(m_\ell=(x_\ell,v_\ell)^\top\), are
\[
\begin{aligned}
&m_1=(-2.4,-2.1)^\top, \qquad
m_2=(-2.5,0)^\top, \qquad
m_3=(-2.4,2.1)^\top,\\
&m_4=(0,-2.0)^\top, \qquad
m_5=(0,0)^\top, \qquad
m_6=(0,2.0)^\top,\\
&m_7=(2.4,-2.1)^\top, \qquad
m_8=(2.5,0)^\top, \qquad
m_9=(2.4,2.1)^\top.
\end{aligned}
\]
The component covariance matrices are
\[
\begin{aligned}
\Sigma_1&=
\begin{pmatrix}
0.080 & 0.010\\
0.010 & 0.070
\end{pmatrix},
&
\Sigma_2&=
\begin{pmatrix}
0.090 & -0.015\\
-0.015 & 0.075
\end{pmatrix},
&
\Sigma_3&=
\begin{pmatrix}
0.075 & 0.012\\
0.012 & 0.085
\end{pmatrix},
\\[6pt]
\Sigma_4&=
\begin{pmatrix}
0.085 & -0.010\\
-0.010 & 0.070
\end{pmatrix},
&
\Sigma_5&=
\begin{pmatrix}
0.095 & 0\\
0 & 0.090
\end{pmatrix},
&
\Sigma_6&=
\begin{pmatrix}
0.080 & 0.018\\
0.018 & 0.080
\end{pmatrix},
\\[6pt]
\Sigma_7&=
\begin{pmatrix}
0.070 & -0.012\\
-0.012 & 0.080
\end{pmatrix},
&
\Sigma_8&=
\begin{pmatrix}
0.085 & 0.014\\
0.014 & 0.075
\end{pmatrix},
&
\Sigma_9&=
\begin{pmatrix}
0.078 & -0.008\\
-0.008 & 0.082
\end{pmatrix}.
\end{aligned}
\]
\end{appendix}

\bibliographystyle{plain}  

\bibliography{ref.bib}
 \end{document}